\newtheorem{theorem}{Theorem}[section]
\newtheorem{corollary}[theorem]{Corollary}
\newtheorem{lemma}[theorem]{Lemma}
\newtheorem{proposition}[theorem]{Proposition}
\newtheorem*{problem}{Problem}
\theoremstyle{definition}
\newtheorem{definition}[theorem]{Definition}
\newtheorem{remark}[theorem]{Remark}
\newtheorem{assumption}[theorem]{Assumption}
\newcommand{\ep}{\varepsilon}
\newcommand{\eps}[1]{{#1}_{\varepsilon}}
\newcommand{\Lmax}{\overline{L}}
\newcommand{\R}{\mathbb{R}}
\newcommand*{\V}[1]{\mathbf{#1}} 
\def\vect#1{\mbox{\boldmath$ #1$}}                   
\newcommand{\be}{{\vect e}}
\newcommand{\bj}{{\vect j}}
\newcommand{\bx}{{\vect x}}
\def\vects#1{\mbox{\scriptsize \boldmath$ #1$}}
\newcommand{\ibe}{{\vects e}}
\newcommand{\ibj}{{\vects j}}
\newcommand{\ibone}{{\vects 1}}
\newcommand{\oa}{{\overline{a}}}
\newcommand{\ob}{{\overline{b}}}
\newcommand{\oq}{{\overline{q}}}
\newcommand{\calE}{{\mathcal{E}}}
\newcommand{\calJ}{{\mathcal{J}}}
\newcommand{\mE}{\mathcal{E}} 
\newcommand{\mO}{\mathcal{O}} 
\newcommand{\dx}{\Delta x} 
\newcommand{\dt}{\Delta t} 
\title[Multi-d Hyperbolic Boundary Feedback Control Problem]
{
Numerical Boundary Control of Multi-Dimensional Hyperbolic Equations
} 
\author[Michael Herty, Kai Hinzmann, Siegfried M\"uller and Ferdinand Thein]{}
\subjclass{Primary: 35L50, 35Q93; Secondary: 93D05.}
\keywords{
Boundary feedback control, multi-dimensional hyperbolic problem, numerical dissipation, stabilization.}
\thanks{$^*$Corresponding author: Kai Hinzmann}
\begin{document}
\maketitle

\centerline{\scshape
Michael Herty$^{{\href{mailto:herty@igpm.rwth-aachen.de}{\textrm{\Letter}}}1}$,
Kai Hinzmann$^{{\href{mailto:hinzmann@ssd.rwth-aachen.de}{\textrm{\Letter}}}*1}$,
Siegfried M\"uller$^{{\href{mailto:mueller@igpm.rwth-aachen.de}{\textrm{\Letter}}}1}$,
and Ferdinand Thein$^{{\href{mailto:thein@uni-mainz.de}{\textrm{\Letter}}}2}$}

\medskip

{\footnotesize
 \centerline{$^1$Institut f\"ur Geometrie und Praktische Mathematik, RWTH Aachen, Templergraben 55, Aachen, D-52056, Germany}
} 

\medskip

{\footnotesize
 \centerline{$^2$Johannes Gutenberg-Universit\"at Mainz, Staudingerweg 9, {55128} Mainz, Germany}
}

\bigskip



\begin{abstract}

Existing theoretical stabilization results for linear, hyperbolic multi--dimensional  problems
are extended to the discretized multi-dimensional problems.
In contrast to existing theoretical and numerical analysis in the spatially one--dimensional case the effect of the numerical dissipation is analyzed and explicitly quantified. Further, using dimensional splitting, the numerical analysis is extended to the multi-dimensional case.
	 The findings are confirmed by numerical simulations for low-order and high-order DG schemes both in the one-dimensional and two-dimensional case.

\end{abstract}


\section{Introduction}
\label{sec:Intro}

Stabilization of spatially one--dimensional systems of hyperbolic balance laws has been a subject of many investigations in the mathematical and engineering community, respectively, see, e.g., the monographs \cite{O1,O2,MR2412038,MR2655971}.  In particular, the boundary control of typically one--dimensional hyperbolic systems, like the isentropic Euler equations or the Saint--Venant equations has been a subject of recent research, see e.g.~\cite{G1,G3,G4,W2,W4,W5,L1,L5} and references therein. The boundary control applied has been of feedback type and has been aimed to stabilize the dynamics at a desired equilibrium. The main analytical tool in the spatially one--dimensional setting 
{is}
the analysis of a family of  weighted Lyapunov functions. They have been proposed as  (exponentially) weighted $L^2$-- (or $H^s$--) norms of the solution. Provided that the boundary conditions are dissipative, exponential decay of the Lyapunov function 
{can be established, cf.~\cite{L1,L2,L4,L5}.} 
Furthermore, a comparison to other stability concepts has been provided \cite{L7}.
Extensions towards input--to--state stability have also been investigated, e.g.~in \cite{MR2899713}, as well as extensions towards nonlocal systems, see e.g.~\cite{MR4172728,MR3097269,MR2852203}. While most of the results aim to stabilize the linearized dynamics, there are also results on stability of nonlinear systems, see e.g.~\cite{L5,O1}.
\par
Most of the existing literature  focuses on analytical results and on the spatially one--dimensional case.  Decay rates of the corresponding {\em  numerical discretization } in the spatially one--dimensional case have been established only recently for linear hyperbolic systems with symmetric source term \cite{D1,D2} in $L^2$, for linear hyperbolic balance laws in $H^s$
 \cite{MR3956429,MR4619935},  and for scalar hyperbolic equations using second--order schemes \cite{MR4381729,MR4534579}. Those results can be seen in part as a numerical counterpart to the analytical results reviewed above.
\par
Furthermore, only recently, the Lyapunov stability for {\em multi--dimensional} hyperbolic has been investigated from an analytical perspective using similar techniques as cited above, see \cite{herty2023bound,MR4770847}. In \cite{herty2023bound}, the authors study (general) linear, hyperbolic multi--dimensional systems of balance laws in a bounded domain. Using again a weighted norm as Lyapunov  function, they derive boundary feedback control that stabilizes the dynamics at a steady--state. Exponential decay of the Lyapunov function has been established, however, without explicit rates of the decay. In the multi-dimensional case, there exists also an approach tailored to the shallow-water equations, where boundary conditions have been designed to control the decay of the energy, see \cite{Dia2013}. 
Hyperbolic multi--dimensional systems satisfying the strong stability condition are studied in \cite{MR4759437}. A study on the relation between the approaches given in  \cite{herty2023bound} and \cite{MR4759437} is provided in \cite{herty2024comparison}.
Further, in \cite{Serre2022} a $L^2$-Lyapunov function for multi--dimensional scalar conservation laws has been studied.
\par
The goal of the present paper is to complement the existing stabilization results for linear, hyperbolic multi--dimensional systems ~\cite{MR4770847,herty2023bound,MR4759437} by an analysis of the corresponding discretized systems. Here, we will focus on the exponential stabilization using the family of Lyapunov functions 
introduced in \cite{MR4770847}. In contrast
to the analysis in the spatially one--dimensional case \cite{D1,D2,MR3956429}, we will also investigate and quantify the effect of the numerical dissipation. This dissipation has already been observed in the numerical results, e.g.~in \cite{D1}, but has not been quantified. On coarse grids, this leads to an additional decay of the Lyapunov function compared with the analytical result.  Furthermore, we use dimensional splitting as numerical method. A careful analysis of the splitting is required in order to obtain the exponential decay rate. Following the analytical result of 
\cite{MR4770847},
the numerical analysis will be presented for linear, multi--dimensional systems, and first--order schemes. Numerical results will also be presented using higher--order schemes, in particular, to illustrate  further the effect of the numerical dissipation.

\section{Description of the Problem}
\label{sec:Problem}

We first introduce the continuous setting before turning to the discretization.
To this end we recall the problem and theoretical results of \cite{MR4770847}.

The feedback control problem in the multi--dimensional case is given by a linear system of hyperbolic partial differential equations.
Let $\Omega\subset\R^d$ be a bounded domain with 
piecewise
smooth boundary $\partial\Omega$.
We consider the following IBVP for a linear system of hyperbolic partial differential equations as in \cite{MR4770847}.

\begin{problem}[General IBVP]
\label{prob:IBVP}
For a given bounded domain $\Omega\subset\R^d$   with 
piecewise
smooth boundary $\partial\Omega$
find a solution $\V{w}:[0,T)\times\Omega\to\R^m$
to the IBVP
\begin{subequations}
\label{eq:IBVP}
    \begin{align}
    \label{eq:IBVP-PDE}
	&\partial_t \V{w}(t, \V{x}) + \sum_{k=1}^d \V{A}^{(k)}(\V{x}) \partial_{x_k} \V{w}(t, \V{x}) + \V{B}(\V{x}) \V{w}(t, \V{x}) = \V{0} ,\quad (t, \V{x})\in [0, T)\times\Omega \\
    \label{eq:IBVP-IC}
	&\V{w}(0, \V{x}) = \V{w}_0(\V{x}), \quad \V{x}\in\Omega \\
    \label{eq:IBVP-BC}
	&{w_i}(t, \V{x}) = {u_i}(t, \V{x}) ,\quad 
	(t, \V{x})\in [0, T)\times \Gamma^-_{{i}},\ {i=1,\ldots,d}.
	\end{align}
\end{subequations}
Further, we assume that $\V{A}^{(k)}(\V{x})$ for $k\in\{1,\dots,d \}$ and $\V{B}(\V{x})$ are sufficiently smooth and bounded $m\times m$ real matrices. In addition, all matrices $\V{A}^{(k)}(\V{x})$ are assumed to be diagonal matrices.
\end{problem}

	The boundary feedback control is realized through the function $\V{u}$ in the boundary condition \eqref{eq:IBVP-BC}.
	Here, we prescribe for each component $w_i$ with  $i\in\{1,\ldots,m\}$ boundary values given by some function $\V{u}(t,\V{x})$ at the inflow part of the boundary $\partial\Omega$ characterized by
$
		\Gamma_i^- := \{\V{x}\in\partial\Omega : \V{a}_i(\V{x})\cdot\V{n}(\V{x}) < 0\},
$
	where the vector of diagonal entries of the matrices $\V{A}^{(k)}$ is denoted by
	$\V{a}_i := (a_{ii}^{(1)}, \dots, a_{ii}^{(d)})$.
	The outflow part of the boundary $\partial\Omega$ for component $i$ will be denoted by
$
		\Gamma_i^+ := \{\V{x}\in\partial\Omega : \V{a}_i(\V{x})\cdot\V{n}(\V{x}) \geq 0\}.
$
	\begin{remark}
		Since Dirichlet boundary conditions can only be prescribed at $\Gamma_i^-$ but not at $\Gamma_i^+$ we refer to these boundary parts as controllable and uncontrollable part, respectively.
		Analogous to \cite{MR4770847}, we further divide $\Gamma_i^-$ into the disjoint sets $\mathcal{C}_i\subset\Gamma_i^-$ where we set some control $u_i(t,\V{x})$ componentwise and $\mathcal{Z}_i\subset\Gamma_i^-$ where we set $u_i(t,\V{x})=0$ for the numerical simulations in Section \ref{sec:SimRes}.
	\end{remark}

We are interested in the exponential stabilization 
{at zero}
of the solution $\V{w}$ to 
{IBVP}
\eqref{eq:IBVP}
by applying a boundary feedback control $\V{u}$.
As in \cite{MR4770847},
we consider smooth solutions $\V{w}\in C^1(0,T; H^s(\Omega))^m$ for $s\geq 1+\frac{d}{2}$ of
{IBVP}
\eqref{eq:IBVP}.
If the initial condition and the boundary data are sufficiently smooth, there exists such a solution, see \cite{Dafermos2016}. Moreover, we assume that the solution is unique.
Exponential stability is defined as follows.
\begin{definition}[Exponentially stable]
\label{def:exp-stab}
	A solution $\V{w}\in C^1(0,T; H^s(\Omega))^m$ for $s\geq 1+\frac{d}{2}$ of
	IBVP \eqref{eq:IBVP}
	is called exponentially stable in the $L^2$--sense if and only if there exists a constant $C\in\R_{>0}$ such that
\begin{align*}
  \left\lVert \V{w}(t,\cdot) \right\rVert _{L^2(\Omega)} \leq \operatorname*{exp}(-Ct) \left\lVert \V{w}(0,\cdot) \right\rVert _{L^2(\Omega)} \qquad\forall t\in[0,T).
\end{align*}
\end{definition}

In this setting the following theorem has been obtained.
Note that Def.~\ref{def:exp-stab} can be extended to exponential stability in the $H^s$-norm.
%
%

\begin{theorem}[Exponential stability]
\label{Hauptresultat}
Let $\V{w}\in C^1(0,T; H^s(\Omega))^m$ for $s\geq 1+\frac{d}{2}$ be a solution to
IBVP \eqref{eq:IBVP}.
A Lyapunov function is given by
\begin{align}
\label{eq:Lyapunov}
L(t) = \int_{\Omega} \V{w}(t, \V{x})^T \mathcal{E}(\mu(\V{x})) \V{w}(t, \V{x})\: d\V{x},
\end{align}
where
\begin{equation}
\label{eq:E}
\mathcal{E}(\mu(\V{x})) := \operatorname{diag}(\operatorname{exp}(\mu_1(\V{x})), \dots, \operatorname{exp}(\mu_m(\V{x})))
\end{equation}
for functions $\mu_i(\V{x})\in H^s(\Omega)$ for $i\in\{1,\dots,m\}$ that satisfy
\begin{align}
\label{eq:Bed_mu}
\begin{aligned}
\sum_{k=1}^d \left(\mathcal{M}^{(k)}\V{A}^{(k)} + \partial_{x_k} \V{A}^{(k)}\right) + \mathcal{D} &= -\operatorname{diag}\!\left(C_L^{(i)}\right)  \\
\text{or written rowwise}\quad
\V{a}_i\cdot\nabla\mu_i(\V{x}) + \nabla\cdot\V{a}_i + \mathcal{D}_{ii} &= -C_L^{(i)}, \qquad C_L^{(i)} \geq C_L > 0
\end{aligned}
\end{align}
for some $C_L\in\R_{>0}$, where
\begin{align}
\label{eq:Def_M}
\mathcal{M}^{(k)}(\V{x}):= \operatorname{diag}\!\left(\partial_{x_k} \mu_1(\V{x}), \dots, \partial_{x_k} \mu_m(\V{x})\right).
\end{align}
Here, $\mathcal{D}$ is assumed to be a constant matrix which satisfies
\begin{align}
\label{eq:Bed_D}
-\V{v}^T \left(\V{B}^T\mathcal{E}(\mu(\V{x}))+\mathcal{E}(\mu(\V{x}))\V{B}\right) \V{v} \leq \V{v}^T \mathcal{D}\mathcal{E}(\mu(\V{x}))\V{v} \qquad\forall\V{v}\in\R^n.
\end{align}
Then, for every $\V{u}(t,\V{x}) = (u_1(t,\V{x}),\dots,u_m(t,\V{x}))^T$ such that
\begin{align}
\label{eq:Bed_u}
-\sum_{i=1}^m \int_{\Gamma_i^-} u_i(t,\V{x})^2 (\V{a}_i\cdot\V{n}) \operatorname{exp}(\mu_i(\V{x}))\:d\V{x} \leq \sum_{i=1}^m \int_{\Gamma_i^+} w_i(t,\V{x})^2 (\V{a}_i\cdot\V{n}) \operatorname{exp}(\mu_i(\V{x}))\:d\V{x},
\end{align}
the Lyapunov function satisfies
\begin{align}
\label{eq:AbklingrateBeweis}
\frac{d}{dt}L(t) \leq -C_L L(t)
\quad\text{and}\quad L(t) \le \exp(-C_L t) L(0) \qquad\forall t\in [0,T).
\end{align}
\end{theorem}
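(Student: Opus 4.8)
The plan is a direct Lyapunov/energy estimate: differentiate $L$ in time, use the evolution equation \eqref{eq:IBVP-PDE} to eliminate $\partial_t\V{w}$, integrate the resulting first–order spatial terms by parts, and then match the interior and boundary remainders against the three structural hypotheses \eqref{eq:Bed_mu}, \eqref{eq:Bed_D} and \eqref{eq:Bed_u}. The assumed regularity $\V{w}\in C^1(0,T;H^s(\Omega))^m$ with $s\ge 1+\tfrac{d}{2}$ (so $\V{w}(t,\cdot)$ is $C^1$ up to $\partial\Omega$) together with $\mu_i\in H^s(\Omega)$ is exactly what justifies differentiation under the integral sign and the use of the divergence theorem with the boundary traces that appear.

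For the first part, since $\calE(\mu(\V{x}))$ is symmetric and depends on $\V{x}$ only, $\frac{d}{dt}L(t)=2\int_\Omega(\partial_t\V{w})^T\calE(\mu(\V{x}))\V{w}\,d\V{x}$; substituting \eqref{eq:IBVP-PDE} and using that each $\V{A}^{(k)}$ is diagonal, hence commutes with $\calE(\mu(\V{x}))$, gives
\[
\frac{d}{dt}L(t)=-2\sum_{k=1}^d\int_\Omega \V{w}^T\calE(\mu(\V{x}))\V{A}^{(k)}\partial_{x_k}\V{w}\,d\V{x}-\int_\Omega \V{w}^T\left(\V{B}^T\calE(\mu(\V{x}))+\calE(\mu(\V{x}))\V{B}\right)\V{w}\,d\V{x}.
\]
Writing the $k$-th transport integrand componentwise as $\tfrac12\sum_i \exp(\mu_i)a_{ii}^{(k)}\partial_{x_k}(w_i^2)$ and integrating by parts in $x_k$ produces a boundary term $\sum_i\int_{\partial\Omega}\exp(\mu_i)(\V{a}_i\cdot\V{n})w_i^2\,dS$ and an interior term whose $i$-th coefficient is $\sum_k(\partial_{x_k}\mu_i\,a_{ii}^{(k)}+\partial_{x_k}a_{ii}^{(k)})=\V{a}_i\cdot\nabla\mu_i+\nabla\cdot\V{a}_i$, i.e. the $(i,i)$–entry of $\sum_k(\mathcal{M}^{(k)}\V{A}^{(k)}+\partial_{x_k}\V{A}^{(k)})$.

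For the interior terms I would note that \eqref{eq:Bed_mu} forces $\mathcal{D}$ to be diagonal (every other term on its left-hand side is diagonal), so \eqref{eq:Bed_D} reads pointwise $-\V{w}^T(\V{B}^T\calE(\mu(\V{x}))+\calE(\mu(\V{x}))\V{B})\V{w}\le\sum_i\mathcal{D}_{ii}\exp(\mu_i)w_i^2$; adding this to the interior transport term and invoking the rowwise form of \eqref{eq:Bed_mu} collapses the whole interior contribution to $-\sum_i\int_\Omega C_L^{(i)}\exp(\mu_i)w_i^2\,d\V{x}\le -C_L\int_\Omega\V{w}^T\calE(\mu(\V{x}))\V{w}\,d\V{x}=-C_L L(t)$, using $C_L^{(i)}\ge C_L>0$. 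For the boundary terms, split $\partial\Omega=\Gamma_i^-\cup\Gamma_i^+$ for each $i$ and use the boundary condition \eqref{eq:IBVP-BC}, $w_i=u_i$, on $\Gamma_i^-$; the boundary contribution becomes $-\sum_i\int_{\Gamma_i^-}u_i^2(\V{a}_i\cdot\V{n})\exp(\mu_i)\,dS-\sum_i\int_{\Gamma_i^+}w_i^2(\V{a}_i\cdot\V{n})\exp(\mu_i)\,dS$, and hypothesis \eqref{eq:Bed_u} says precisely that the first (nonnegative, since $\V{a}_i\cdot\V{n}<0$ on $\Gamma_i^-$) sum is dominated by $\sum_i\int_{\Gamma_i^+}w_i^2(\V{a}_i\cdot\V{n})\exp(\mu_i)\,dS$, so the boundary contribution is $\le 0$. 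Combining the two estimates gives $\frac{d}{dt}L(t)\le -C_L L(t)$; multiplying by $\exp(C_L t)$ and integrating (equivalently, Grönwall) yields $L(t)\le\exp(-C_L t)L(0)$ on $[0,T)$, which is \eqref{eq:AbklingrateBeweis}.

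The computation itself is routine energy-method bookkeeping, so there is no deep obstacle; the points demanding care are the sign tracking in the boundary term — keeping the orientation of the outward normal $\V{n}$, the definitions of $\Gamma_i^\pm$, and the direction of the inequality in \eqref{eq:Bed_u} consistent so that the inflow and outflow pieces cancel with the correct sign — together with checking that the assumed Sobolev regularity of $\V{w}$ and of the weights $\mu_i$ is enough to legitimize the integration by parts and to make every boundary trace well defined.
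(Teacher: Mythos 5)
Your proof is correct and takes essentially the same route as the paper: the paper defers the proof of Theorem \ref{Hauptresultat} to \cite{MR4770847}, but your computation (differentiate $L$, insert \eqref{eq:IBVP-PDE}, integrate by parts, absorb the interior terms via \eqref{eq:Bed_mu}--\eqref{eq:Bed_D}, dominate the boundary terms via \eqref{eq:Bed_u}, then Gronwall) is exactly the weighted-$L^2$ energy scheme that the paper itself mirrors in the proof of Lemma \ref{la:decay_pde_with_diffusion}. Your observation that \eqref{eq:Bed_mu} forces $\mathcal{D}$ to be diagonal, and your sign bookkeeping on $\Gamma_i^\pm$, are handled consistently with the hypotheses.
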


The proof of Theorem \ref{Hauptresultat} can be found in
\cite{MR4770847}.

\begin{remark}
In addition to the Hamilton--Jacobi equations,	Theorem \ref{Hauptresultat} can also be applied to the diagonal systems of the multi--dimensional discrete--velocity kinetic models which are discussed in \cite{yang2023boundary} since these are also covered by the IBVP 
{\eqref{eq:IBVP}.}
\end{remark}

%
%
%

\subsection{Feedback Control Problem with Viscosity}
\label{subsec:ControlProblemViscous}

Since any finite volume scheme will introduce numerical viscosity,  we first study the viscous approximation to
IBVP \eqref{eq:IBVP}.
To investigate the effect of viscosity on the decay of the Lyapunov function, we consider a viscous control problem adding  diffusion to \eqref{eq:IBVP-PDE} on a multi-dimensional rectangular domain. For simplicity we confine to the scalar case neglecting zero order terms.
\begin{problem}[Scalar IBVP with diffusion]
\label{prob:IBVP-Diff}
For a given bounded domain $\Omega = [\V{\oa},\V{\ob}]\subset\R^d$
find a solution
$w:[0,T)\times\Omega\to\R$
to the IBVP
%
%
%
\begin{subequations}
\label{eq:IBVP-Diff}
    \begin{align}
    \label{eq:IBVP-Diff-PDE}
	&\partial_t w(t, \V{x}) + \sum_{k=1}^d a_k  \partial_{x_k} w(t, \V{x})
= \oq \sum_{k=1}^d  \partial_{x_k,x_k} w(t, \V{x})  ,\quad (t, \V{x})\in [0, T)\times\Omega\\
    \label{eq:IBVP-Diff-IC}
	&w(0, \V{x}) = w_0(\V{x}) ,\quad \V{x}\in\Omega \\
    \label{eq:IBVP-Diff-BC-0}
	&w(t, \V{x}) =
	u(t, \V{x}) ,\quad (t, \V{x})\in [0, T)\times\Gamma^-,\\
    \label{eq:IBVP-Diff-BC-n}
	&\partial_{\V{n}}w(t, \V{x}) = w_{b,\V{n}}(t, \V{x}) ,\quad (t, \V{x})\in [0, T)\times\Gamma^+.
	 \end{align}
\end{subequations}
The advection velocities are assumed to be $\V{a}=(a_1,\ldots,a_d)\in\R^d$, $a_k\ne0$, and we assume a
positive viscosity coefficient $\oq>0$. By $\partial_{\V{n}}$ we denote the derivative in the outer unit
normal direction $\V{n}$ to the boundary $\partial\Omega$.
\end{problem}

Since  \eqref{eq:IBVP-Diff-PDE} is a second order parabolic PDE, we impose two boundary conditions. Here, we choose
Dirichlet conditions and Neumann conditions at the inflow  boundary $\Gamma^-=\{\V{x}\in\partial\Omega \,:\, \V{a}\cdot \V{n}(\V{x}) < 0\}$ and the outflow boundary $\Gamma^+=\{\V{x}\in\partial\Omega \,:\, \V{a}\cdot \V{n}(\V{x}) > 0\}$, respectively.

Now we can proof the following lemma.

\begin{lemma}
\label{la:decay_pde_with_diffusion}
Let
$w\in C^1((0,T); H^s(\Omega))$
for $s \geq 1+\frac{d}{2}$
be a solution to
IBVP \eqref{eq:IBVP-Diff}.
		The Lyapunov function is given by
		\begin{align}
		  \label{eq:1D_cont_Lyapunov}
			L(t) = \int_{\Omega} w(t, \V{x})^2 \mathcal{E}(\mu(\V{x})) \: d\V{x},
		\end{align}
		where we assume that there exists a function $\mu\in H^s(\Omega)$ satisfying
		\begin{align}
		\label{eq:1D_Bed_mu_diffusion}
			\V{a}\cdot \nabla \mu(\V{x}) = -C_L
		\end{align}
		for some value $C_L\in\R_{>0}$.
		The control $u$ of
		boundary condition \eqref{eq:IBVP-Diff-BC-0} satisfies
		\begin{align}
		\label{eq:1D_Bed_u_diffusion}
			\begin{cases}
				u^2(t,\V{x}_L^k) \mathcal{E}(\mu(\V{x}_L^k)) \leq  w^2(t,\V{x}_R^k-0) \mathcal{E}(\mu(\V{x}_R^k))\,, & a_k>0\\
				 u^2(t,\V{x}_R^k) \mathcal{E}(\mu(\V{x}_R^k)) \leq  w^2(t,\V{x}_L^k+0) \mathcal{E}(\mu(\V{x}_L^k))\,, & a_k<0
			\end{cases}
		\end{align}
		at the inflow boundary $\Gamma^-_k$ in the $k$th coordinate direction.
		Here $\V{x}_L^k$ and $\V{x}_R^k=\V{x}_L^k - (\ob_k-\oa_k)\be_k $ denote opposing points at the left and the right boundary surface in the $k$th direction, respectively.\\
		Then the Lyapunov function is bounded by
		\begin{align}
		\label{eq:decay_rate_diffusion}
			L(t) \leq \operatorname*{exp}(-C_L t) L(0) + \int_0^t R(s;\bar{q})\operatorname*{exp}(-C_L (t - s))\:ds
		\end{align}
		where $R(t;\bar{q}) := \int_{\Omega} 2\bar{q} w\mathcal{E}(\mu) \nabla \cdot \nabla w\:d\V{x}$.
	\end{lemma}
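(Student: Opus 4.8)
The plan is to differentiate the Lyapunov function $L(t)$ in \eqref{eq:1D_cont_Lyapunov} in time, substitute the PDE \eqref{eq:IBVP-Diff-PDE}, and then split the result into an advective part and a viscous part. First I would compute
\begin{align*}
\frac{d}{dt}L(t) = \int_\Omega 2 w\, \mathcal{E}(\mu)\, \partial_t w \: d\V{x}
= -\int_\Omega 2 w\, \mathcal{E}(\mu) \sum_{k=1}^d a_k \partial_{x_k} w \: d\V{x}
+ \int_\Omega 2\bar q\, w\, \mathcal{E}(\mu) \sum_{k=1}^d \partial_{x_k,x_k} w \: d\V{x}.
\end{align*}
The second term is exactly $R(t;\bar q)$ as defined in the statement, so it remains to show that the advective term is bounded by $-C_L L(t)$. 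To do so I would rewrite $2 w\, \partial_{x_k} w = \partial_{x_k}(w^2)$, integrate by parts in each coordinate direction, and use that $\mathcal{E}(\mu)=\operatorname{exp}(\mu)$ so that $\partial_{x_k}\mathcal{E}(\mu) = (\partial_{x_k}\mu)\,\mathcal{E}(\mu)$. This produces an interior term $\int_\Omega w^2 \sum_k a_k (\partial_{x_k}\mu)\,\mathcal{E}(\mu)\,d\V{x} = \int_\Omega w^2 (\V{a}\cdot\nabla\mu)\,\mathcal{E}(\mu)\,d\V{x}$, which by assumption \eqref{eq:1D_Bed_mu_diffusion} equals $-C_L L(t)$, plus a boundary term.

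The boundary term is $-\sum_{k=1}^d a_k \int_{\partial\Omega} w^2 \mathcal{E}(\mu)\, n_k \: d\V{x}$ (with the appropriate sign convention from the divergence theorem), which I would split across the faces of the rectangle $[\V{\oa},\V{\ob}]$. On each pair of opposing faces in the $k$th direction, the outward normal is $\mp\be_k$, so the contribution reduces to $-a_k\big(\int_{\text{right face}} w^2\mathcal{E}(\mu)\,d\V{x} - \int_{\text{left face}} w^2\mathcal{E}(\mu)\,d\V{x}\big)$. Depending on the sign of $a_k$, one of these faces is the inflow part $\Gamma_k^-$ (where $w=u$ by \eqref{eq:IBVP-Diff-BC-0}) and the other the outflow part $\Gamma_k^+$. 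The control assumption \eqref{eq:1D_Bed_u_diffusion} is precisely what is needed to bound the inflow contribution by the outflow contribution pointwise (matching opposing points $\V{x}_L^k$ and $\V{x}_R^k$), so the net boundary term is $\le 0$. Combining, $\frac{d}{dt}L(t) \le -C_L L(t) + R(t;\bar q)$, and an application of Gronwall's inequality (or the integrating factor $\operatorname{exp}(C_L t)$) yields \eqref{eq:decay_rate_diffusion}.

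The main obstacle I expect is the careful bookkeeping of the viscous boundary contributions in the integration by parts of $R(t;\bar q)$ — but here the statement is lenient, since $R(t;\bar q)$ is left in the un-integrated form $\int_\Omega 2\bar q\, w\,\mathcal{E}(\mu)\,\nabla\cdot\nabla w\,d\V{x}$, so I would \emph{not} integrate the viscous term by parts at all and simply carry it along as the remainder. Thus the only genuinely delicate point is matching the orientation of the normals on each face of the box with the sign of $a_k$ to see that the advective boundary term splits into an inflow piece (controlled by $u$) and an outflow piece, and then invoking \eqref{eq:1D_Bed_u_diffusion} in the correct case. The rest is the standard weighted-$L^2$ Lyapunov computation already used in the proof of Theorem \ref{Hauptresultat}.
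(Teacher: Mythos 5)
Your proposal is correct and follows essentially the same route as the paper's proof: substitute the PDE into $\frac{d}{dt}L$, keep the viscous term un-integrated as $R(t;\bar q)$, rewrite the advective term via the product rule/integration by parts so that the interior part becomes $\int_\Omega w^2(\V{a}\cdot\nabla\mu)\mathcal{E}(\mu)\,d\V{x} = -C_L L(t)$ by \eqref{eq:1D_Bed_mu_diffusion}, bound the facewise boundary term using the sign of $a_k$, the Dirichlet data and \eqref{eq:1D_Bed_u_diffusion}, and conclude with Gronwall. No gaps; the bookkeeping you flag (matching normals and signs of $a_k$ on opposing faces) is exactly the step the paper carries out with the Gaussian theorem.
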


	For the proof of Lemma \ref{la:decay_pde_with_diffusion} we proceed analogously to the proof of Theorem \ref{Hauptresultat}. An additional term occurs accounting for the viscosity coefficient $\bar{q}$ that needs to be estimated.

	\begin{proof}
		Inserting \eqref{eq:IBVP-Diff-PDE} in the time derivative of the Lyapunov function \eqref{eq:1D_cont_Lyapunov}  yields
		\begin{align}
		\label{eq:Proof1D_diff_dL}
		  \frac{d}{dt}L(t) =
		  -2 \int_{\Omega} ( \nabla \cdot (\V{a} w) - \oq \nabla\cdot\nabla w) w \mathcal{E}(\mu)\:d\V{x}.
		\end{align}
		By the product rule we have
		\begin{align*}
		   2 w \mathcal{E}(\mu) \nabla\cdot(\V{a} w) =
		   \nabla\cdot (\V{a} w^2 \mathcal{E}(\mu)) - w^2 \V{a} \cdot \nabla \mathcal{E}(\mu).
		\end{align*}
		From this we conclude
		\begin{align}
		\label{eq:Proof1D_diff_dL-2}
			\frac{d}{dt}L(t) =
			-\int_{\Omega} \nabla \cdot(\V{a} w^2 \mathcal{E}(\mu)) -
			                w^2 \V{a} \cdot \nabla \mathcal{E}(\mu) -
			               2 \oq w \mathcal{E}(\mu) \nabla \cdot \nabla w \:d\V{x}.
		\end{align}
By means of the Gaussian theorem we obtain for the divergence part
		\begin{align*}
			\int_{\Omega} \nabla \cdot(\V{a} w^2 \mathcal{E}(\mu))\:d\V{x} =
			\sum_{k=1}^d a_k \left( \int_{\Gamma^k}(w^2 \mathcal{E}(\mu))(t,\V{x}_R^k(\V{s}))  - (w^2 \mathcal{E}(\mu))(t,\V{x}_L^k(\V{s})) \:d\V{s} \right) ,
		\end{align*}
		where $\V{x}_L^k(\V{s})$ and $\V{x}_R^k(\V{s})$ denote the boundary points of the surfaces
		$x_k=\oa_k$ and $x_k=\ob_k$ in the $k$th coordinate direction, respectively,   parameterized by means of the $(d-1)$--dimensional domain $\Gamma^k$.
		From the boundary condition \eqref{eq:IBVP-Diff-BC-0} and the assumptions \eqref{eq:1D_Bed_u_diffusion} we conclude
		\begin{align*}
			&\int_{\Omega} \nabla \cdot(\V{a} w^2 \mathcal{E}(\mu))\:d\V{x} = \\
			&\sum_{k=1}^d a_k
			\begin{cases}
			  \int_{\Gamma^k} w^2(t, \V{x}_R^k)\mathcal{E}(\mu(\V{x}_R^k)) - u^2(t, \V{x}_L^k)\mathcal{E}(\mu(\V{x}_L^k)) \:d\V{s}\,,& a_k>0\\
			  \int_{\Gamma^k} u^2(t, \V{x}_R^k)\mathcal{E}(\mu(\V{x}_R^k)) - w^2(t,\V{x})_L^k)\mathcal{E}(\mu(\V{x}_L^k))\:d\V{s}\,, & a_k<0
			\end{cases}
			\ge 0.
		\end{align*}
	For the remaining integral in \eqref{eq:Proof1D_diff_dL-2} we obtain
		\begin{align*}
                   I(t) :=
                   \int_{\Omega} w^2 \V{a} \cdot \nabla \mathcal{E}(\mu) +
			               2 \oq w \mathcal{E}(\mu) \nabla \cdot \nabla w \:d\V{x} =
                   \int_{\Omega} -C_L  w^2  \mathcal{E}(\mu)  +
			               2 \oq w \mathcal{E}(\mu) \nabla \cdot \nabla w \:d\V{x} .
		\end{align*}
		By using ${\nabla} \mathcal{E}(\mu) = \mathcal{E}(\mu) {\nabla} \mu$
	as well as \eqref{eq:1D_Bed_mu_diffusion}
		we may estimate \eqref{eq:Proof1D_diff_dL-2} by
		\begin{align*}
			\frac{d}{dt}L(t) \leq I(t) = -C_L L(t) + R(t; \bar{q}).
		\end{align*}
		Applying Gronwall's Lemma gives
		\begin{align*}
			L(t) \leq \operatorname*{exp}(-C_L t) L(0) + \int_0^t R(s;\bar{q})\operatorname*{exp}(-C_L (t - s))\:ds
		\end{align*}
	\end{proof}

	\begin{remark}
		Note that $\bar{q}=0$ implies $R(t;0) = 0$, i.e., in this case we obtain
		the same decay as in Theorem \ref{Hauptresultat}
		for the inviscid feedback control problem.
		Furthermore, we note that diffusion has no  impact on the decay rate $C_L$, but the decay will be perturbed  by a residual.
		Later, we will also quantify the term $R(\cdot\ ;\bar{q})$ numerically.
		Therefore, we expect for a numerical scheme a 
                {faster}
		decay of the Lyapunov function compared with the continuous result.
	\end{remark}

\subsection{Numerical analysis of the discretized problem}
\label{subsec:Analysis}

In Sect.~\ref{subsec:ControlProblemViscous} we showed that viscosity in the feedback control problem, see Eq.~\eqref{eq:IBVP-Diff}, does not affect the exponential decay rate but influences the overall decay behavior due to an additional residual term, see Eq.~\eqref{eq:decay_rate_diffusion}. We now investigate whether numerical dissipation in the inviscid control problem \eqref{eq:IBVP} has a similar effect on the numerical decay behavior. For this purpose, we consider a simplified configuration of a multi--dimensional scalar feedback control problem.
This is similar to IBVP \eqref{eq:IBVP-Diff} where again we neglect source terms in \eqref{eq:IBVP-PDE}. Note that neglecting the source terms the IBVP \eqref{eq:IBVP} decouples in $m$ independent scalar problems justifying to consider only a single scalar problem even in the case of higher spatial dimensions.

\begin{problem}[Scalar IBVP]
\label{prob:IBVP-scalar}
For a given bounded domain $\Omega = [\V{\oa},\V{\ob}]\subset\R^d$
find a solution
$w:[0,T)\times\Omega\to\R$
to the IBVP
\begin{subequations}
\label{eq:IBVP-scalar}
    \begin{align}
    \label{eq:IBVP-scalar-PDE}
	\partial_t w(t, \V{x}) + \sum_{k=1}^d a_k  \partial_{x_k} w(t, \V{x})
= 0 &,\quad (t, \V{x})\in [0, T)\times\Omega\\
    \label{eq:IBVP-scalar-IC}
	w(0, \V{x}) = w_0(\V{x}) &,\quad \V{x}\in\Omega \\
    \label{eq:IBVP-scalar-BC}
	w(t, \V{x}) = u(t, \V{x}) &,\quad (t, \V{x})\in [0, T)\times\Gamma^-.
	 \end{align}
\end{subequations}
Here, the advection velocities are assumed to be $\V{a}=(a_1,\ldots,a_d)\in\R^d$, $a_k\ne0$.  Note that boundary conditions can only be imposed on the inflow boundary $\Gamma^-=\{\V{x}\in\partial\Omega \,:\, \V{a}\cdot \V{n}(\V{x}) < 0\}$ where $\V{n}$ denotes the unit outer normal direction $\V{n}$ to the boundary $\partial\Omega$.
\end{problem}

To approximate \eqref{eq:IBVP-scalar} numerically, we perform in each time step a dimensional splitting where for each direction we solve a quasi-one dimensional problem. For this purpose, we discretize the intervals
$[\oa_k,\ob_k]$, $k=1,\ldots,d$,  for the $k$th direction by
\begin{align*}
  & x_{j-1/2}^k  = \oa_k +(j-1) \Delta x_k,\
  x_j^k = (x_{j-1/2}^k + x_{j+1/2}^k)/2,\
  \Delta x_k = (\ob_k - \oa_k)/M_k,\\
  & I_j^k = (x_{j-1/2}^k, x_{j+1/2}^k),\ j\in\{0,\ldots,M_k+1\} .
\end{align*}
From this we determine the multi--dimensional discretization of the domain $\Omega$
\begin{align*}
 \bx_{\ibj}  = (x_{j_1}^1,\ldots,x_{j_d}^d),\
   V_{\ibj} = (\bx_{\ibj-\ibone/2},\bx_{\ibj+\ibone/2})
          = \bigotimes_{k=1}^d I_j^k, \
          \bj\in \calJ^+ = \bigotimes_{k=1}^d \{0,\ldots,M_k+1 \}
\end{align*}
Note that  $\Omega = \bigcup_{\ibj\in \calJ} V_{\ibj} $ with $\calJ = \bigotimes_{k=1}^d \{1,\ldots,M_k \}$. The elements corresponding to the set $\calJ^+\backslash \calJ$ denote the ghost cells at the boundary.
In the $k$th direction the ghost cells are
\begin{align*}
  &\calE^k_L = \{ (l_1,\ldots,l_{i-1},0,l_{i+1},\ldots,l_d) \,:\, l_j\in\{1,\ldots,M_j\},\ j\ne k\},\\
  &\calE^k_R = \{ (l_1,\ldots,l_{i-1},M_i+1,l_{i+1},\ldots,l_d) \,:\, l_j\in\{1,\ldots,M_j\},\ j\ne k \} .
\end{align*}
The bounded time interval $[0,T]$ is discretized by $t_n = n\Delta t$, $n\in\{0,\dots,N\}$, with
$N\Delta t = T$. The time step $\Delta t$ is chosen such that the $\operatorname*{CFL}$ condition
\begin{align}
\label{eq:CFL-cond}
 \max_{k=1,\ldots,d} \lambda_k\left\lvert a_k \right\rvert =
 \max_{k=1,\ldots,d} \frac{\Delta t}{\Delta x_k} \left\lvert a_k \right\rvert \leq 1
 \qquad\text{for}\qquad\lambda_k =
 \frac{\Delta t}{\Delta x_k}
\end{align}
holds.

In  each direction $k=1,\ldots,d$ we successively apply a linear 3--point finite volume scheme
\begin{align}
\label{eq:FVS-split}
   & w^{n,k}_{\ibj} =
      w^{n,k-1}_{\ibj} - \frac{a_k \lambda_k}{2} ( w^{n,k-1}_{\ibj +\ibe_k} - w^{n,k-1}_{\ibj -\ibe_k} ) +  \frac{q_k}{2}(w^{n,k-1}_{\ibj -\ibe_k} - 2 w^{n,k-1}_{\ibj} + w^{n,k-1}_{\ibj +\ibe_k} )
\end{align}
with boundary conditions at the left and right boundary in the $k$th direction
\begin{subequations}
\label{eq:FVS-split-bc}
\begin{align}
\label{eq:FVS-split-bc-left}
   & w^{n,k-1}_\ibj =
     \begin{cases}
u(t_n,x_{\ibj+\ibe_k/2})\,, & a_k >0 \\
       w^{n,k-1}_{\ibj+\ibe_k}\,, & a_k < 0
     \end{cases},\ \bj \in \calE_L^k, \\
\label{eq:FVS-split-bc-right}
   & w^{n,k-1}_\ibj =
     \begin{cases}
u(t_n,x_{\ibj-\ibe_k/2})\,, & a_k <0 \\
        w^{n,k-1}_{\ibj-\ibe_k}\,, & a_k > 0
     \end{cases},\ \bj \in \calE_R^k.
\end{align}
\end{subequations}

Note that there is only  flow in the $k$th direction but no flow in the other directions. Thus, the update along all the lines in the $k$th direction are independent of each other. 
Essentially all linear 3--point finite volume schemes in 1D can be written in the form 
\eqref{eq:FVS-split} only differing in the choice of the numerical viscosity coefficient $q_k$.
In particular, if the lines are unbounded  the scheme
is $l_2$--stable if the numerical viscosity coefficient  is chosen as 
\begin{align*}
(\lambda_k a_k)^2 \le q_k \le 1.
\end{align*}
 For $q_k=1$ it corresponds to the Lax--Friedrichs scheme. In general it is of first order except for $q_k=(\lambda_k a_k)^2$ where it is of  2nd order corresponding to the Lax-Wendroff scheme, see \cite{Godlewski-Raviart:91}.

We emphasize that the $k^{th}$ intermediate step does not correspond to the intermediate time $t_{n+(k-1)/d}$ because in each step we perform a full time step $\Delta t$ instead of $\Delta t/d$.
This becomes important in equation \eqref{eq:discr_Bed_mu-multid}.
Here, $w^{n,0}_{\ibj} = w^{n}_{\ibj}$ and $w^{n,d}_{\ibj} = w^{n+1}_{\ibj}$ correspond to $t_n$
and $t_{n+1}$, respectively.
	The initial condition $w_0$ is approximated in each cell by the average over this cell, i.e.
\begin{equation}
\label{eq:FVS-split-ic}
w_\ibj^0 = \frac{1}{|V_\ibj|} \int_{V_{\ibj}} w_0(\bx)\: d\bx \quad\text{for}\quad \ibj\in\calJ.
	\end{equation}

The Lyapunov function \eqref{eq:Lyapunov} is discretized at time $t_n$  and 
{at} 
the
intermediate stages of the dimensionally splitted scheme
by means of the midpoint rule, i.e.
\begin{align}
\label{eq:disc_Lyapunov-multid}
\mathcal{L}^n := \sum_{\ibj\in\calJ} \left(w_\ibj^n\right)^2\mathcal{E}_\ibj |V_\ibj|,\quad
\mathcal{L}^{n,k} := \sum_{\ibj\in\calJ} \left(w_\ibj^{n,k}\right)^2\mathcal{E}_\ibj |V_\ibj|,\
k=1,\ldots,d,
\end{align}
where $\mathcal{E}_\ibj := \operatorname*{exp}(\mu(\bx_\ibj))$ and $|V_\ibj|=\prod_{k=1}^d \Delta x_k = |V|$.
Note that $\mathcal{L}^n = \mathcal{L}^{n,0}$  and $\mathcal{L}^{n+1} = \mathcal{L}^{n,d}$.

For the discrete Lyapunov function we can verify the following result.

	\begin{theorem}[Decay of the discrete Lyapunov function in multi--dimension]
		\label{thm:decay_discr_Lyapunov_theorem-multid}
		Let $\left(w_\ibj^n\right)_{\ibj\in\calJ^+, n=0,\dots,N}$ be a bounded approximate solution to
		IBVP \eqref{eq:IBVP-scalar}
		computed by the numerical scheme \eqref{eq:FVS-split} with boundary data \eqref{eq:FVS-split-bc} and initial data \eqref{eq:FVS-split-ic}
		for some $\Delta t>0$ which satisfies
		\begin{align}
		\label{eq:Bed_delta_t-multid}
		\max_{k=1,\ldots,d} \frac{\Delta t}{\Delta x_k} \left\lvert a_k \right\rvert \leq 1
			 \quad\text{and}\quad
	        \left(1-C_L\Delta t\right) > 0.
		\end{align}
		The discrete Lyapunov function is given by \eqref{eq:disc_Lyapunov-multid}
		with
		\begin{align}
		\label{eq:E-mu}
		   \mathcal{E}_\ibj := \operatorname*{exp}(\mu(\bx_\ibj)),\quad
		   \mu(\bx)=\sum_{k=1}^d \mu_k(x_k)
		\end{align}
		where we assume that there exist  functions $\mu_k\in H^s(\Omega)$, $s\ge 1  + \frac{d}{2}$,
		 satisfying
		\begin{align}
		  \label{eq:discr_Bed_mu-multid}
			a_k\mu_k'(x) = -\frac{C_L}{d},\quad k=1,\ldots, K
		\end{align}
		for some constant value $C_L>0$.
		The boundary values \eqref{eq:FVS-split-bc} corresponding to the boundary conditions \eqref{eq:IBVP-scalar-BC} are for all directions $k=1,\ldots,d$ and all time steps $n=0,\ldots,N$
\begin{align}
\label{eq:ed_BC_discr-multid}
    w^{n,k-1}_\ibj =
     \begin{cases}
        u^{n,k}_\ibj\,, & a_k >0 \\
        w^{n,k-1}_{\ibj+\ibe_k}\,, & a_k < 0
     \end{cases},\ \bj \in \calE_L^k, \quad
    w^{n,k-1}_\ibj =
     \begin{cases}
        u^{n,k}_\ibj\,, & a_k <0 \\
        w^{n,k-1}_{\ibj-\ibe_k}\,, & a_k > 0
     \end{cases},\ \bj \in \calE_R^k,
\end{align}
		where $u^{n,k}_\ibj$ satisfies
		\begin{align}
		  \label{eq:Bed_u_discr-multid}
			\begin{cases}
				a_k\left(u^{n,k}_\ibj\right)^2 \frac{\mathcal{E}_{\ibj} + \mathcal{E}_{\ibj+\ibe_k}}{2} \leq a_k\left(w_{\ibj+(M_k+1)\ibe_k}^n\right)^2 \frac{\mathcal{E}_{\ibj+M_k\ibe_k} + \mathcal{E}_{\ibj+(M_k+1)\ibe_k}}{2}\,, & a_k>0,\ \bj \in \calE_L^k\\
				a_k\left(u^{n,k}_\ibj\right)^2\frac{\mathcal{E}_\ibj + \mathcal{E}_{\ibj-\ibe_k}}{2} \geq a_k\left(w_{\ibj-(M_k+1)\ibe_k}^n\right)^2\frac{\mathcal{E}_{\ibj-(M_k+1)\ibe_k} + \mathcal{E}_{\ibj-M_k\ibe_k}}{2} \,,& a_k<0,\ \bj \in \calE_R^k
			\end{cases} .
		\end{align}
		Then the discrete Lyapunov function is bounded by
		\begin{align}
		  \label{eq:discr_decay_rate-multid}
			\mathcal{L}^n \leq
\operatorname*{exp}(-C_L n\Delta t)\mathcal{L}^0 + \Delta t\sum_{i=1}^n \sum_{k=1}^d \operatorname*{exp}(-C_L (k-1+(i-1)d)\Delta t/d) \mathcal{R}^{n-i,d-k}
		\end{align}
		for a residual 
$\mathcal{R}^{n,k} = \mathcal{R}^{n,k}(\Delta t, \Delta x_{k}; a_{k},q_k)$, $k=1,\ldots,d$,
defined in \eqref{residual_R_n_k}.
	\end{theorem}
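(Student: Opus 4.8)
The plan is to reduce the $d$--dimensional statement to a one--dimensional estimate for a single stage of the dimensional splitting, to prove the one--step recursion
\begin{align*}
\mathcal{L}^{n,k} \le \exp\!\Big(-\tfrac{C_L\Delta t}{d}\Big)\,\mathcal{L}^{n,k-1} + \Delta t\,\mathcal{R}^{n,k}, \qquad k=1,\dots,d,
\end{align*}
in which the boundary (surface) terms have already been shown to be nonpositive, and then to iterate this recursion first over $k$ within one time step and afterwards over $n$. Here I use $\mathcal{L}^{n,0}=\mathcal{L}^n$, $\mathcal{L}^{n,d}=\mathcal{L}^{n+1}$, and $\big(\exp(-C_L\Delta t/d)\big)^d=\exp(-C_L\Delta t)$; the hypothesis \eqref{eq:Bed_delta_t-multid} supplies both the CFL restriction needed for the stage update \eqref{eq:FVS-split} and the positivity $1-C_L\Delta t/d>0$ of the per--stage contraction factor.

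For the reduction I fix $n$ and a direction $k$ and use that the $k$th stage of \eqref{eq:FVS-split} acts only along the grid lines parallel to $\ibe_k$, so that the sum defining $\mathcal{L}^{n,k}$ in \eqref{eq:disc_Lyapunov-multid} decomposes into independent sums over these lines. Along such a line the weight factorises as $\mathcal{E}_\ibj = \exp(\mu_k(x_{j_k}^k))\prod_{l\ne k}\exp(\mu_l(x_{j_l}^l))$, the product over $l\ne k$ being constant along the line, so the stage is a genuinely one--dimensional problem with the scalar weight $\exp(\mu_k)$ and can be treated in complete analogy with the proof of Lemma~\ref{la:decay_pde_with_diffusion}, now at the discrete level. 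The decisive ingredient is \eqref{eq:discr_Bed_mu-multid}, which forces $\mu_k$ to be affine and is therefore equivalent to the geometric identity $\mathcal{E}_{\ibj\pm\ibe_k}=\exp\!\big(\mp C_L\Delta x_k/(d\,a_k)\big)\,\mathcal{E}_\ibj$; note that the factor $C_L/d$ (rather than $C_L$) here is exactly what the remark preceding \eqref{eq:discr_Bed_mu-multid} anticipates, since every stage advances by a full $\Delta t$.

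The core computation is an exact discrete summation by parts. Writing \eqref{eq:FVS-split} as $w^{n,k}_\ibj = \tfrac{q_k+a_k\lambda_k}{2}w^{n,k-1}_{\ibj-\ibe_k}+(1-q_k)w^{n,k-1}_\ibj+\tfrac{q_k-a_k\lambda_k}{2}w^{n,k-1}_{\ibj+\ibe_k}$, I square this, multiply by $\mathcal{E}_\ibj|V|$, and sum over $\ibj\in\calJ$. Shifting the summation index in the off--diagonal contributions, using $w_\ibj w_{\ibj+\ell\ibe_k}=\tfrac12 w_\ibj^2+\tfrac12 w_{\ibj+\ell\ibe_k}^2-\tfrac12(w_{\ibj+\ell\ibe_k}-w_\ibj)^2$ for $\ell\in\{1,2\}$, and inserting the geometric identity above, the $w^2$--parts recombine into a single scalar prefactor $\kappa_k$ — depending only on $C_L\Delta t/d$, $a_k\lambda_k$ and $q_k$ — times $\mathcal{L}^{n,k-1}$; the $\mathcal{E}_\ibj$--weighted squared differences $(w_{\ibj+\ibe_k}-w_\ibj)^2$ and $(w_{\ibj+2\ibe_k}-w_\ibj)^2$ (with coefficients built from $q_k$ and $a_k\lambda_k$) are the discrete counterpart of the viscous residual $R$ of Lemma~\ref{la:decay_pde_with_diffusion}; and the index shifts leave surface terms supported on the ghost layers $\calE^k_L,\calE^k_R$. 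Inserting the extrapolation/control prescription \eqref{eq:ed_BC_discr-multid}, each surface term reduces to a difference of the form $a_k\big[(w^{\mathrm{out}})^2\,\overline{\mathcal{E}}-u^2\,\overline{\mathcal{E}}\big]$, with $\overline{\mathcal{E}}$ the arithmetic mean of $\mathcal{E}$ over the two adjacent cells and $w^{\mathrm{out}}$ the value entering from the uncontrollable boundary; assumption \eqref{eq:Bed_u_discr-multid} is precisely the hypothesis making each such difference nonpositive, where one has to keep track of the fact that the relevant outflow reference value is the one produced by the previous stage (again because each stage is a full time step). It then remains to split the one--line identity into $\exp(-C_L\Delta t/d)\,\mathcal{L}^{n,k-1}$, the nonpositive surface terms, and a remainder $\Delta t\,\mathcal{R}^{n,k}$: using $C_L\Delta x_k/(d\,a_k)=(C_L\Delta t/d)/(a_k\lambda_k)$, the CFL bound $|a_k\lambda_k|\le1$, $1-C_L\Delta t/d>0$ and $1-x\le e^{-x}$, one checks that $\kappa_k$ differs from $\exp(-C_L\Delta t/d)$ only by a controllable correction depending on the same three quantities; that correction, together with the $\mathcal{E}$--weighted squared differences, is collected into $\Delta t\,\mathcal{R}^{n,k}$, whose explicit form is \eqref{residual_R_n_k}. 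Discarding the surface terms yields the one--step recursion.

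Composing the recursion over $k=1,\dots,d$ and using $\mathcal{L}^{n,0}=\mathcal{L}^n$, $\mathcal{L}^{n,d}=\mathcal{L}^{n+1}$ gives
\begin{align*}
\mathcal{L}^{n+1}\le\exp(-C_L\Delta t)\,\mathcal{L}^n+\Delta t\sum_{k=1}^d\exp\!\Big(-C_L\tfrac{(d-k)\Delta t}{d}\Big)\mathcal{R}^{n,k},
\end{align*}
and a discrete Gronwall argument — iterating from level $n$ down to $0$ and re--indexing the time level by $i=n-m$ — produces \eqref{eq:discr_decay_rate-multid}, the weights $\exp(-C_L(k-1+(i-1)d)\Delta t/d)$ and the residual index $d-k$ being exactly those generated by this composition (up to the indexing convention fixed in \eqref{residual_R_n_k}). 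The main obstacle is not the one--line algebra, which is lengthy but mechanical, but rather (a) the boundary bookkeeping in the splitting — establishing which time level and which ghost layer the control condition \eqref{eq:Bed_u_discr-multid} must be evaluated at so that the surface contributions genuinely cancel, an issue absent in the spatially one--dimensional analyses; and (b) pinning down the precise split between the contractive prefactor $\kappa_k$ and the residual $\mathcal{R}^{n,k}$ so that the former is $\le\exp(-C_L\Delta t/d)$ under the sole restriction \eqref{eq:Bed_delta_t-multid} — this is where the explicit definition \eqref{residual_R_n_k} has to be made.
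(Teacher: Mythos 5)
Your proposal is correct and its overall architecture coincides with the paper's: you exploit the separable weight \eqref{eq:E-mu} to split the multi--dimensional Lyapunov sum into independent one--dimensional line sums on which the $k$th stage acts, you use \eqref{eq:discr_Bed_mu-multid} with the per--stage rate $C_L/d$ (each stage being a full step $\Delta t$), you make the surface terms nonpositive via the control constraint, and you compose the per--stage contraction first over $k$ and then over $n$; your exponents and the residual index $d-k$ in \eqref{eq:discr_decay_rate-multid} come out exactly as in the paper, up to the stage--labelling shift you acknowledge. Where you genuinely deviate is in how the single--stage (i.e.\ one--dimensional) estimate is obtained: the paper proves a separate 1D theorem (Theorem \ref{decay_discr_Lyapunov_theorem}) by mimicking the continuous Lyapunov argument --- introducing the central difference \eqref{D_i} as a discrete analogue of $(aw^2\mathcal{E})_x$, Taylor--expanding the weight with $\mathcal{E}'=-(C_L/a)\mathcal{E}$, and collecting the explicit residuals \eqref{R_0}, \eqref{R_E_1}, \eqref{R_E_2}, \eqref{R_u}, \eqref{R_2_n_II}, \eqref{RestI} --- and then applies it line by line with $C_L/d$, keeping the factor $1-C_L\Delta t/d$ and passing to exponentials only at the very end via $1+x\le e^x$; you instead square the scheme in its convex--combination (viscous) form and use the exact identity $\mathcal{E}_{\ibj\pm\ibe_k}=\exp(\mp C_L\Delta x_k/(d\,a_k))\,\mathcal{E}_\ibj$, which is available because \eqref{eq:discr_Bed_mu-multid} makes $\mu_k$ affine. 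Your route avoids the Taylor remainder bookkeeping and is arguably cleaner in this constant--coefficient setting, but it produces a residual whose explicit form differs from the one the theorem pins down in \eqref{residual_R_n_k}; since the statement refers to that specific residual, a complete proof in your style must either identify your correction terms with the paper's decomposition or restate the residual accordingly --- the structural conclusion (rate $C_L$ untouched, viscosity only in the perturbation) is the same. Two further small points: your direct use of $\exp(-C_L\Delta t/d)$ in the per--stage recursion is legitimate because $\mathcal{L}^{n,k}\ge 0$ and $1-C_L\Delta t/d\le e^{-C_L\Delta t/d}$; and your remark that the outflow reference value in the control constraint should be the output of the previous stage is a careful observation --- the paper's condition \eqref{eq:Bed_u_discr-multid} is written with $w^n$ rather than $w^{n,k-1}$, while its proof applies the 1D control condition \eqref{Bed_u_discr} to the stage data, so your bookkeeping is, if anything, the more precise one.
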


	Some remarks are in order.

	\textbf{Assumption on the function $\mu$.}
		Note that the condition on 
$\mu(\V{x})$ in \eqref{eq:E-mu} is only needed due to
the dimensional splitting.

\textbf{Comparison of the decay for the discrete and continuous Lyapunov functions}.
Comparing the estimate \eqref{eq:discr_decay_rate-multid} of the discrete Lyapunov function  corresponding to the discretization of the inviscid feedback control problem and
the estimates  \eqref{eq:decay_rate_diffusion} and \eqref{eq:AbklingrateBeweis} for the continuous Lyapunov function for the viscous and the inviscid  feedback control problems, see
IBVP \eqref{eq:IBVP-Diff}
and
\eqref{eq:IBVP}, respectively,
we note that
 (i) the exponential decay rate is the same and
 (ii) the numerical viscosity in the discretization causes a
 residual term in the discrete decay that is an approximation of the continuous residual of the viscous problem.

\textbf{Boundedness of discrete data.}
		The boundedness of the discrete solution $\left(w_i^n\right)_{i=0,\dots,M+1}^{n=0,\dots,N}$ follows from the fact that the  numerical flux is monotone if
the CFL condition \eqref{eq:CFL-cond} holds. This holds provided the initial data and the boundary data given by the Dirichlet boundary condition are bounded. In this case  the data at the new time level $t_{n+1}$ are bounded by the minimum and the maximum of the cell data and the boundary data at the old time level $t_n$, see \cite{Godlewski-Raviart:91}.

\textbf{Consistency of control.}
Note the similarity of the constraints on the control in the discrete condition  \eqref{eq:Bed_u_discr-multid} and the continuous condition  \eqref{eq:1D_Bed_u_diffusion}.
In the discrete case the mean values $(\mathcal{E}_{\ibj} + \mathcal{E}_{\ibj+\ibe_k})/2$ and $(\mathcal{E}_{\ibj-(M_k+1)\ibe_k} + \mathcal{E}_{\ibj-M_k\ibe_k})/2$ approximate the point values $\mE(\mu(\bx_L^k))$ and $\mE(\mu(\bx_R^k))$.
Furthermore, we note that
\eqref{eq:E-mu} and \eqref{eq:discr_Bed_mu-multid} imply
\eqref{eq:1D_Bed_mu_diffusion}.

\textbf{Consistency of decay rate.}
The estimate \eqref{eq:discr_decay_rate-multid} reveals that numerical viscosity has no influence on the decay rate of the discrete Lyapunov function \eqref{eq:disc_Lyapunov-multid}. This is consistent to the decay rate of the  exact Lyapunov \eqref{eq:Lyapunov} in the estimate \eqref{eq:AbklingrateBeweis}.
Instead, the numerical viscosity causes a perturbation in the decay of the discrete Lyapunov function that is similar to the perturbation in the decay of the  exact Lyapunov function for the diffusive
IBVP \eqref{eq:IBVP-Diff}. In \cite{Hinzmann:24} consistency of the discrete estimate \eqref{eq:discr_decay_rate-multid}
and the continuous estimates \eqref{eq:AbklingrateBeweis} (no diffusion) and \eqref{eq:decay_rate_diffusion} (with diffusion) is verified in the 1D case. The
discrete perturbation vanishes for $\Delta x\to 0$, $\lambda=const$, i.e., $\oq=0$ in
IBVP \eqref{eq:IBVP-Diff}, whereas the discrete perturbation is consistent with perturbation in the diffusive case for $\Delta x\to$, $\lambda/\Delta x=const$, if  $\oq=1/2 q\lambda/\Delta x$, i.e., the viscosity coefficient $\oq$ is proportional
to the numerical viscosity coefficient $q$.

\textbf{Accuracy of discretization and accuracy of decay.}
As confirmed
by numerical investigations in Section \ref{subsec:MultidHighOrder} a higher order discretization of the PDE does not necessarily lead to a higher order approximation of the exact rate
$L(0)\operatorname*{exp}(-C_L t)$ in Theorem \ref{Hauptresultat}. However, a higher order approximation introduces less numerical viscosity affecting the perturbation term in the estimate of the decay \eqref{eq:disc_Lyapunov-multid} of the discrete Lyapunov function \eqref{eq:disc_Lyapunov-multid}. Typically this results in a decay
that is closer to the exact decay.

\section{Proof of Theorem \ref{thm:decay_discr_Lyapunov_theorem-multid}}
\label{sec:Proof}

We now prove Theorem \ref{thm:decay_discr_Lyapunov_theorem-multid}  in two steps. In Sect.~\ref{sec:Proof-1D} we first consider the one--dimensional case. This  result is extended to the multi--dimensional case in Sect.~\ref{subsec:Analysis-multiD} using a dimensional splitting argument.

\subsection{Setting and one--dimensional analysis}
	\label{sec:Proof-1D}

For ease of representation we state
the problem, its discretization and the theorem for the one--dimensional case.
\begin{problem}[One--dimensional scalar IBVP]
\label{prob:IBVP-scalar-1D}
For a given bounded domain $\Omega = [\oa,\ob]\subset\R$
find a solution
$w:[0,T)\times\Omega\to\R$
to the IBVP
\begin{subequations}
\label{1D_IBVP}
    \begin{align}
\label{1D_IBVP_A}
	&\partial_t w(t, x) + a \partial_{x} w(t, x)
= 0 ,\quad (t, x)\in [0, T)\times\Omega\\
\label{1D_IBVP_B}
	&w(0, x) = w_0(x) ,\quad x\in\Omega \\
    \label{1D_IBVP_C}
	&w(t, x) =
	u(t, x) ,\quad (t, x)\in [0, T)\times\Gamma^-
	 \end{align}
\end{subequations}
with non-vanishing advection velocity  $a\ne 0$ and boundary condition  only imposed on the inflow boundary $\Gamma^-=\{\oa\}$ or $\Gamma^-=\{\ob\}$ if $a>0$ and $a<0$, respectively.
\end{problem}
This problem is approximated on a uniform discretization for the interval $\Omega=[\oa,\ob]$ by the elements
\begin{align*}
  & x_{j-1/2}  = \oa +(j-1) \Delta x,\
  x_j = (x_{j-1/2} + x_{j+1/2})/2,\
  \Delta x = (\ob - \oa)/M,\\
  & I_j = (x_{j-1/2}, x_{j+1/2}),\ j\in \calJ^+:=\{0,\ldots,M+1\} .
\end{align*}
Note that  $\Omega = \bigcup_{j\in \calJ} V_{j} $ with $\calJ = \{1,\ldots,M \}$. The elements corresponding to the set $\calJ^+\backslash \calJ = \{0,M+1\}$ denote the ghost cells at the boundary.
The bounded time interval $[0,T]$ is discretized by $t_n = n\Delta t$, $n\in\{0,\dots,N\}$, with
$N\Delta t = T$. The time step $\Delta t$ is chosen such that the $\operatorname*{CFL}$ condition
\begin{align*}
  \lambda\left\lvert a \right\rvert =
  \frac{\Delta t}{\Delta x} \left\lvert a \right\rvert \leq 1
 \qquad\text{for}\qquad\lambda =
 \frac{\Delta t}{\Delta x}
\end{align*}
holds.

In the 1D case a general
3--point finite volume scheme \eqref{eq:FVS-split} in viscous form, see \cite{Godlewski-Raviart:91}, Chap.~3.3, reads
\begin{align}
\label{3point_scheme}
   & w^{n_+1}_j =
      w^{n}_j - \frac{a \lambda}{2} ( w^{n}_{j +1} - w^{n}_{j -1} ) +  \frac{q}{2}(w^{n}_{j -1} - 2 w^{n}_{j} + w^{n}_{j +1} )
\end{align}
with Courant number $\lambda a$ and numerical viscosity coefficient $q$.
The boundary conditions at the left and right boundary given by \eqref{eq:FVS-split-bc} are determined by
\begin{align}
\label{eq:FVS-split-bc-1D}
    w^{n}_0 =
     \begin{cases}
		u(t_{n},x_{1/2})\,, & a >0 \\
        w^{n}_{1}\,, & a < 0
     \end{cases},\
    w^{n}_{M+1} =
     \begin{cases}
		u(t_{n},x_{M+1/2})\,, & a <0 \\
        w^{n}_{M}\,, & a > 0
     \end{cases}.
\end{align}
	The initial condition $w_0$ is approximated in each cell by the average over this cell, i.e.
\begin{align}
\label{discr_inital_data}
w_j^0 = \frac{1}{|V_j|} \int_{V_{j}} w_0(x)\: dx \quad\text{for}\quad j\in\calJ.
\end{align}

	We control the solution at the inflow boundary and observe at the 
{outflow boundary. We apply}
	a discrete boundary feedback control $u^n = u^n\!\left(w_{M+1}^n\right)$ and $u^n = u^n\!\left(w_{0}^n\right)$ if $a>0$ and $a<0$, respectively, by setting $w_0^n := u^n$ and  $w_{M+1}^n := u^n$ in the left (right) ghost cell and extend the solution constantly at the right (left) ghost cell  by setting $w_{M+1}^n = w_M^n$ and $w_{0}^n = w_1^n$, respectively.
Thus, the control enters the flow field by
	\begin{equation}
	\label{3point_scheme_w1}
		\begin{cases}
			w_1^{n+1} = w_1^n - \frac{\lambda a}{2}\left(w_{2}^n - u^n\right) + \frac{q}{2}\left(w_{2}^n-2w_1^n+u^n\right)\,, & a>0\\
			w_M^{n+1} = w_M^n - \frac{\lambda a}{2}\left(u^n - w_{M-1}^n\right) + \frac{q}{2}\left(u^n-2w_M^n+w_{M-1}^n\right)\,, & a<0
		\end{cases} .
	\end{equation}
	At the outflow boundary we have
	\begin{equation}
	\label{3point_scheme_wM}
		\begin{cases}
			w_M^{n+1} = w_M^n - \frac{\lambda a}{2}\left(w_{M}^n - w_{M-1}^n\right) + \frac{q}{2}\left( w_{M}^n-2w_M^n+w_{M-1}^n\right)\,, & a>0\\
			w_1^{n+1} = w_1^n - \frac{\lambda a}{2}\left(w_{2}^n - w_1^n\right) + \frac{q}{2}\left(w_{2}^n-2w_1^n+w_1^n\right)\,, & a<0
		\end{cases}
	\end{equation}
	and then we set
	\begin{equation}\label{3point_scheme_wMp1}
		\begin{cases}
			w_{M+1}^{n+1} = w_M^{n+1}\,, & a>0\\
			w_0^{n+1} = w_1^{n+1}\,, & a<0
		\end{cases} .
	\end{equation}

	The Lyapunov function \eqref{eq:Lyapunov} is discretized at time $t_n$   by means of the midpoint rule, i.e.
\begin{align}
\label{disc_Lyapunov}
\mathcal{L}^n := \sum_{j\in\calJ} \left(w_j^n\right)^2\mathcal{E}_j \Delta x,
\end{align}
where 
$\mathcal{E}_j := \operatorname*{exp}(\mu(x_j))$.
If $w$ is continuous, then $\mathcal{L}^{n}$ is a second order approximation of  $L(t_n)$, i.e.,
	\begin{align*}
		L(t_n) = \mathcal{L}^n + \mO(\dx^2).
	\end{align*}

\subsubsection{Decay rate of the discrete Lyapunov function}

	With the one--dimensional setting described above
	 the decay of the discrete Lyapunov function can be estimated by the following theorem that is an analogue to Theorem \ref{thm:decay_discr_Lyapunov_theorem-multid}.

	\begin{theorem}
		\label{decay_discr_Lyapunov_theorem}
		Let $\left(w_i^n\right)_{i=0,\dots,M+1}^{n=0,\dots,N}$ be a bounded discrete solution to
		IBVP \eqref{1D_IBVP}
		computed by the numerical scheme \eqref{3point_scheme}-\eqref{3point_scheme_wMp1}
		for some $\Delta t>0$ which satisfies
		\begin{equation}\label{Bed_delta_t}
			\frac{\Delta t}{\Delta x}\left\lvert a \right\rvert \leq 1 \quad\text{and}\quad \left(1-C_L\Delta t\right) > 0.
		\end{equation}
		The discrete Lyapunov function is given by \eqref{disc_Lyapunov}
		with $\mathcal{E}_{j} := \operatorname*{exp}(\mu(x_{j}))$
		where we assume that there exists a function $\mu(x)\in H^s([\oa,\ob])$, $s\ge \frac{3}{2}$.  satisfying
		\begin{equation}\label{discr_Bed_mu}
			a\mu_x(x) = -C_L
		\end{equation}
		for some constant value $C_L>0$.
		The boundary condition for \eqref{1D_IBVP_C} is realized by setting
		\begin{equation}\label{Bed_BC_discr}
			\begin{cases}
				w_0^n =	u^n\,, & a>0\\
				w_{M+1}^n =	u^n\,, & a<0
			\end{cases}
			\quad\text{for } n\in\{0,\dots, N\},
		\end{equation}
		where $u^n$ satisfies
		\begin{equation}
		\label{Bed_u_discr}
			\begin{cases}
				a\left(u^n\right)^2 \frac{\mathcal{E}_{0} + \mathcal{E}_{1}}{2} \leq a\left(w_{M+1}^n\right)^2 \frac{\mathcal{E}_{M} + \mathcal{E}_{M+1}}{2}\,, & a>0\\
				a\left(u^n\right)^2\frac{\mathcal{E}_M + \mathcal{E}_{M+1}}{2} \geq  a\left(w_0^n\right)^2\frac{\mathcal{E}_0 + \mathcal{E}_1}{2}\,, & a<0
			\end{cases} .
		\end{equation}
		Then the discrete Lyapunov function is bounded by
		\begin{equation}
		\label{discr_decay_rate}
			\mathcal{L}^n \leq \operatorname*{exp}(-C_L n\Delta t)\mathcal{L}^0 + \Delta t\sum_{i=1}^n \operatorname*{exp}(-C_L (i-1)\Delta t) \mathcal{R}^{n-i}
		\end{equation}
		for a residual $\mathcal{R}^n = \mathcal{R}^n(\Delta t, \Delta x; a,q)$.
	\end{theorem}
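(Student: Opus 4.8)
The plan is to establish the one--step estimate
$\mathcal{L}^{n+1}\le\exp(-C_L\Delta t)\,\mathcal{L}^n+\Delta t\,\mathcal{R}^n$
and then unroll it: iterating this recursion from $n$ down to $0$ and telescoping the geometric factors produces precisely \eqref{discr_decay_rate}, a discrete Gronwall step. Without loss of generality I treat $a>0$; the case $a<0$ follows from the reflection $x\mapsto\oa+\ob-x$, which interchanges the two boundaries and the roles of \eqref{3point_scheme_w1} and \eqref{3point_scheme_wM} (and flips the inequality in \eqref{Bed_u_discr}). The structural fact used throughout is that, since $a$ is constant, the $\mu$ solving \eqref{discr_Bed_mu} is affine, so the discrete weights obey the \emph{exact} relation $\mathcal{E}_{j\pm1}=\exp(\mp C_L\Delta x/a)\,\mathcal{E}_j=\exp(\mp C_L\Delta t/(\lambda a))\,\mathcal{E}_j$; this is the discrete analogue of $\nabla\mathcal{E}(\mu)=\mathcal{E}(\mu)\nabla\mu$ combined with \eqref{discr_Bed_mu} as used in the proof of Lemma~\ref{la:decay_pde_with_diffusion}.

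For the core computation I start from $\mathcal{L}^{n+1}=\Delta x\sum_{j=1}^M(w_j^{n+1})^2\mathcal{E}_j$. Since $w_0^n=u^n$ and $w_{M+1}^n=w_M^n$ (cf.\ \eqref{3point_scheme_w1}--\eqref{3point_scheme_wMp1}), the single stencil \eqref{3point_scheme} applies for all $j\in\{1,\dots,M\}$, and I write $w_j^{n+1}=\alpha_{-1}w_{j-1}^n+\alpha_0 w_j^n+\alpha_1 w_{j+1}^n$ with $\alpha_{-1}=\tfrac12(\lambda a+q)$, $\alpha_0=1-q$, $\alpha_1=\tfrac12(q-\lambda a)$, so $\alpha_{-1}+\alpha_0+\alpha_1=1$. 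Expanding the square and replacing every cross product by $2bc=b^2+c^2-(b-c)^2$ gives the discrete energy identity
\begin{align*}
(w_j^{n+1})^2 &= \alpha_{-1}(w_{j-1}^n)^2 + \alpha_0(w_j^n)^2 + \alpha_1(w_{j+1}^n)^2 \\
&\quad - \alpha_{-1}\alpha_0(w_{j-1}^n-w_j^n)^2 - \alpha_0\alpha_1(w_j^n-w_{j+1}^n)^2 - \alpha_{-1}\alpha_1(w_{j-1}^n-w_{j+1}^n)^2 ,
\end{align*}
valid precisely because the coefficients sum to $1$. Multiplying by $\mathcal{E}_j\Delta x$, summing over $j$, and reindexing the two shifted quadratic sums with the geometric relation above, the ``diagonal'' part collapses to $\kappa\,\mathcal{L}^n$ with $\kappa:=\alpha_0+\alpha_{-1}\exp(-C_L\Delta x/a)+\alpha_1\exp(C_L\Delta x/a)$, plus boundary corrections localized near $j\in\{1,M\}$. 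I then split $\kappa\,\mathcal{L}^n=\exp(-C_L\Delta t)\,\mathcal{L}^n+(\kappa-\exp(-C_L\Delta t))\,\mathcal{L}^n$ and put into $\Delta t\,\mathcal{R}^n=\mathcal{R}^n(\Delta t,\Delta x;a,q)$ both the $\mathcal{O}(\Delta t)$ discrepancy $(\kappa-\exp(-C_L\Delta t))\mathcal{L}^n$ (which vanishes for the exact scheme $q=\lambda a=1$) and the $\mathcal{E}_j$--weighted squared--difference terms; these are the numerical--dissipation contributions, the discrete counterpart of the residual $R(t;\bar q)$ of Lemma~\ref{la:decay_pde_with_diffusion}. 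The CFL bound $\lambda|a|\le1$ keeps the stencil coefficients of the expected sign (in the dissipative range $(\lambda a)^2\le q\le1$), and $1-C_L\Delta t>0$ from \eqref{Bed_delta_t} is the smallness condition under which the remaining estimates close.

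It remains to handle the boundary corrections. Collecting all terms that survived the reindexing, together with the near--boundary pieces of the squared--difference terms (they involve $u^n=w_0^n$, $w_1^n$ and $w_M^n=w_{M+1}^n$), one finds, for $a>0$, that they reorganize into a nonnegative multiple of $a(u^n)^2\tfrac{\mathcal{E}_0+\mathcal{E}_1}{2}-a(w_{M+1}^n)^2\tfrac{\mathcal{E}_M+\mathcal{E}_{M+1}}{2}$, up to terms that are nonpositive or absorbed into $\mathcal{R}^n$; by the control constraint \eqref{Bed_u_discr} (and the outflow extension \eqref{3point_scheme_wMp1}) this net contribution is nonpositive and can be discarded from the upper bound. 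This is the discrete analogue of the sign of the boundary flux $-a[w^2\mathcal{E}(\mu)]_{\partial\Omega}$ in the continuous estimates of Theorem~\ref{Hauptresultat} and Lemma~\ref{la:decay_pde_with_diffusion}. Putting the three pieces together yields $\mathcal{L}^{n+1}\le\exp(-C_L\Delta t)\mathcal{L}^n+\Delta t\,\mathcal{R}^n$, and then \eqref{discr_decay_rate} by the telescoping of the first paragraph. I expect the main obstacle to be the bookkeeping in these last two steps combined: tracking how the near--boundary pieces of the squared--difference terms recombine with the transport boundary fluxes so that \eqref{Bed_u_discr} is exactly what is needed, and cleanly separating the stencil/weight interaction into the clean contraction $\exp(-C_L\Delta t)$ and the part that must be placed into $\mathcal{R}^n$. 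Once the discrete energy identity above is in hand, everything else is careful but elementary algebra.
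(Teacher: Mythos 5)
Your plan is correct, and it reaches \eqref{discr_decay_rate} by a genuinely different decomposition than the paper. The paper mimics the continuous proof of Lemma \ref{la:decay_pde_with_diffusion}: it rewrites the transport part of $(\mathcal{L}^{n+1}-\mathcal{L}^n)/\Delta t$ through the central difference of $a w^2\mathcal{E}$, telescopes that sum to the boundary, invokes \eqref{Bed_u_discr}, and then Taylor--expands $\mathcal{E}$ using $\mathcal{E}'=-\tfrac{C_L}{a}\mathcal{E}$ to extract $-C_L\mathcal{L}^n$, which yields a one--step bound with factor $(1-C_L\Delta t)$; the recursion needs $1-C_L\Delta t>0$ and the final bound follows from $1+x\le\exp(x)$. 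You instead write the 3--point scheme as an affine combination with coefficients summing to one, use the exact energy identity for such combinations, and exploit that \eqref{discr_Bed_mu} forces $\mu$ to be affine, so the weights obey the exact ratio $\mathcal{E}_{j\pm1}=\exp(\mp C_L\Delta x/a)\mathcal{E}_j$; this collapses the diagonal part to $\kappa\mathcal{L}^n$ and gives a direct $\exp(-C_L\Delta t)$ contraction (your route does not even need $1-C_L\Delta t>0$, since $\exp(-C_L\Delta t)>0$ makes the iteration of the inequality legitimate). Your boundary bookkeeping does work out: after reindexing, the boundary corrections equal $\tfrac{\lambda a}{2}\bigl[(u^n)^2(\mathcal{E}_0+\mathcal{E}_1)-(w_M^n)^2(\mathcal{E}_M+\mathcal{E}_{M+1})\bigr]$ plus leftover terms such as $((w_1^n)^2-(u^n)^2)\mathcal{E}_0$, exactly the combination controlled by \eqref{Bed_u_discr} with $w_{M+1}^n=w_M^n$, the leftovers playing the role of the paper's $R_u^n$. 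What each approach buys: yours avoids Taylor remainders (no analogues of \eqref{R_E_1}, \eqref{R_E_2}) and isolates the numerical dissipation as explicit weighted squared differences; the paper's decomposition produces the specific residual \eqref{residual_R_n} that is reused verbatim in the multi--dimensional Theorem \ref{thm:decay_discr_Lyapunov_theorem-multid} via \eqref{residual_R_n_k} and is the one compared to the viscous residual $R(t;\bar q)$, so your (equally valid) residual would differ in form, which only matters for those consistency discussions, not for the statement itself. Two small inaccuracies in your write--up, neither load--bearing: the stencil coefficient $\alpha_1=\tfrac12(q-\lambda a)$ is negative for $(\lambda a)^2\le q<\lambda a$ (e.g.\ Lax--Wendroff), so the ``expected sign'' remark is not right, but no sign of the residual is ever needed; and since $\mathcal{R}^n$ is unconstrained in the statement, the sign of $\kappa-\exp(-C_L\Delta t)$ is likewise irrelevant, as you implicitly use.
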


	\begin{proof}
		For the proof  we mimic the steps in the proof of Lemma \ref{la:decay_pde_with_diffusion} for the partial differential equation with diffusion in the discrete case.
For better readability, we divide the proof into four parts.

\textbf{Part I.} For some fixed $n\geq 0$ the discrete derivative of the Lyapunov function \eqref{disc_Lyapunov} is given by
\begin{equation}
\label{before_central_diff}
  \frac{\mathcal{L}^{n+1}-\mathcal{L}^n}{\Delta t}
  =
  \sum_{i=1}^M   \frac{\left(w_i^{n+1}\right)^2 - \left(w_i^n\right)^2}{\Delta t} \mathcal{E}_i\Delta x =
  -\sum_{i=1}^M 2a w_i^n\frac{w_{i+1}^n - w_{i-1}^n}{2\Delta x}\mathcal{E}_i\Delta x	+ R_1^n(\lambda, a, q)
\end{equation}
with residual
			\begin{equation}\label{RestI}
				\begin{aligned}
					R_1^n(\lambda, a, q) &:= \frac{1}{\lambda} \sum_{i=1}^M q w_i^n\left(w_{i+1}^n-2w_i^n+w_{i-1}^n\right)\mathcal{E}_i\\
					&\quad + \frac{1}{\lambda} \sum_{i=1}^M \left(- \frac{\lambda a}{2}\left(w_{i+1}^n - w_{i-1}^n\right) + \frac{q}{2}\left(w_{i+1}^n-2w_i^n+w_{i-1}^n\right)\right)^2\mathcal{E}_i.
				\end{aligned}
			\end{equation}
Here we make use of \eqref{3point_scheme} that provides us with
			\begin{equation*}
				\begin{aligned}
					\left(w_i^{n+1}\right)^2 - \left(w_i^n\right)^2
					&= - \lambda a w_i^n\left(w_{i+1}^n - w_{i-1}^n\right)
					+ q w_i^n\left(w_{i+1}^n-2w_i^n+w_{i-1}^n\right)\\
					&\quad + \left(- \frac{\lambda a}{2}\left(w_{i+1}^n - w_{i-1}^n\right) + \frac{q}{2}\left(w_{i+1}^n-2w_i^n+w_{i-1}^n\right)\right)^2.
				\end{aligned}
			\end{equation*}

\textbf{Part II.} Inspired by the proof of Lemma \ref{la:decay_pde_with_diffusion} we  derive a discrete analogue to the continuous derivative $\left(aw^2\mathcal{E}\right)_x$.
For this purpose, we replace this derivative by the central difference
\begin{equation}
  \label{D_i}
  \mathcal{D}_i :=
  \frac{a\left(w_{i+1}^n\right)^2\mathcal{E}_{i+1} - a\left(w_{i-1}^n\right)^2\mathcal{E}_{i-1}}{2\Delta x}  .
\end{equation}
This term can be rewritten as
\begin{equation}
  \label{cont_deriv_as_central_diff}
    \mathcal{D}_i
    = aw_{i+1}^n \frac{w_{i+1}^n\mathcal{E}_{i+1} - w_i^n\mathcal{E}_i}{2\Delta x}  +
       a w_i^n\mathcal{E}_i\frac{w_{i+1}^n - w_{i-1}^n}{2\Delta x} +
       aw_{i-1}^n \frac{w_i^n\mathcal{E}_i - w_{i-1}^n\mathcal{E}_{i-1}}{2\Delta x} .
\end{equation}
Replacing $w_{i+1}^n = w_i^n + \left(w_{i+1}^n - w_i^n\right)$ and $w_{i-1}^n = w_i^n + \left(w_{i-1}^n - w_i^n\right)$  in the first and third term we obtain after some rearrangement
			\begin{equation*}
				\begin{aligned}
					\mathcal{D}_i	&= a\left(w_{i+1}^n\right)^2 \frac{\mathcal{E}_{i+1} - \mathcal{E}_i}{2\Delta x} + a\left(w_{i-1}^n\right)^2 \frac{\mathcal{E}_i - \mathcal{E}_{i-1}}{2\Delta x}
					+ 2 a w_i^n\mathcal{E}_i\frac{w_{i+1}^n - w_{i-1}^n}{2\Delta x}\\
					&\quad + a\mathcal{E}_i\frac{\left(w_{i+1}^n - w_i^n\right)^2}{2\Delta x} - a\mathcal{E}_i \frac{\left(w_i^n - w_{i-1}^n\right)^2}{2\Delta x}.
				\end{aligned}
			\end{equation*}
			Summation over all cells and using \eqref{D_i} yields
			\begin{equation}\label{will_be_replaced}
				\sum_{i=1}^M 2a w_i^n\mathcal{E}_i\frac{w_{i+1}^n - w_{i-1}^n}{2\Delta x} = \sum_{i=1}^M \frac{a\left(w_{i+1}^n\right)^2\mathcal{E}_{i+1} - a\left(w_{i-1}^n\right)^2\mathcal{E}_{i-1}}{2\Delta x}
				- \frac{1}{\Delta x} R_{\mathcal{E}}^n(a)	- \frac{1}{\Delta x} R_{2}^n(a)
			\end{equation}
			with residuals
			\begin{align}
			\label{R_E}
				R_{\mathcal{E}}^n(a) &:= \frac{a}{2}
				\sum_{i=1}^M \left(\left(w_{i+1}^n\right)^2 \left(\mathcal{E}_{i+1} - \mathcal{E}_i\right) + \left(w_{i-1}^n\right)^2 \left(\mathcal{E}_i - \mathcal{E}_{i-1}\right)\right),\\
			\label{R_2_n}
				R_{2}^n(a) &:= \frac{a}{2}	\sum_{i=1}^M \mathcal{E}_i \left( \left(w_{i+1}^n-w_i^n\right)^2 - \left(w_i^n-w_{i-1}^n\right)^2\right).
			\end{align}
			Inserting (\ref{will_be_replaced}) in (\ref{before_central_diff}) gives
			\begin{align*}
					&\frac{\mathcal{L}^{n+1}-\mathcal{L}^n}{\Delta t} =\\
					&- \left(\sum_{i=1}^M \frac{\left(w_{i+1}^n\right)^2 a\mathcal{E}_{i+1} - \left(w_{i-1}^n\right)^2a\mathcal{E}_{i-1}}{2\Delta x}
					- \frac{1}{\Delta x}R_{\mathcal{E}}^n(a) - \frac{1}{\Delta x}R_{2}^n(a)\right)\Delta x
					+ R_1^n(\Delta t, \Delta x; a, q).
			\end{align*}
By an index shift and the boundary conditions \eqref{Bed_BC_discr} the sum reduces to
			\begin{equation*}
				\begin{aligned}
					&\sum_{i=1}^M \frac{\left(w_{i+1}^n\right)^2 a\mathcal{E}_{i+1} - \left(w_{i-1}^n\right)^2a\mathcal{E}_{i-1}}{2\Delta x}\Delta x
					=  \\
					&\frac{a}{2} \left(-\left(w_{0}^n\right)^2\mathcal{E}_{0} - \left(w_{1}^n\right)^2\mathcal{E}_{1} + \left(w_{M}^n\right)^2\mathcal{E}_{M} + \left(w_{M+1}^n\right)^2\mathcal{E}_{M+1}\right)= \\
					& -\frac{a}{2}
					\begin{cases}
						\left(u^n\right)^2\left(\mathcal{E}_{0}+\mathcal{E}_1\right) + \left(\left(w_{1}^n\right)^2-\left(u^n\right)^2\right)\mathcal{E}_{1} - \left(w_{M+1}^n\right)^2\left(\mathcal{E}_{M} + \mathcal{E}_{M+1}\right)\,, &  a>0\\
						\left(w_{0}^n\right)^2\left(\mathcal{E}_{0} + \mathcal{E}_{1}\right) + \left(\left(u^n\right)^2-\left(w_{M}^n\right)^2\right)\mathcal{E}_{M} - \left(u^n\right)^2\left(\mathcal{E}_M + \mathcal{E}_{M+1}\right)\,, &  a<0
					\end{cases} .
				\end{aligned}
			\end{equation*}
The constraints \eqref{Bed_u_discr} on the control $u^n$ provides the inequality
			\begin{equation*}
				-\sum_{i=1}^M \frac{\left(w_{i+1}^n\right)^2 a\mathcal{E}_{i+1} - \left(w_{i-1}^n\right)^2a\mathcal{E}_{i-1}}{2\Delta x}\Delta x \leq R_u^n(a),
			\end{equation*}
where the residual on the right-hand side is determined by
			\begin{equation}\label{R_u}
				R_u^n(a) = \frac{a}{2} \begin{cases}
					\left(\left(w_{1}^n\right)^2-\left(u^n\right)^2\right)\mathcal{E}_{1}\,, & a>0\\
					\left(\left(u^n\right)^2-\left(w_{M}^n\right)^2\right)\mathcal{E}_{M}\,,& a<0
				\end{cases} .
			\end{equation}
Finally, the discrete time derivative of the discrete Lyapunov function can be estimated by
			\begin{equation}
			\label{eq:part2-1}
				\frac{\mathcal{L}^{n+1}-\mathcal{L}^n}{\Delta t} \leq R_{\mathcal{E}}^n(a) + R_u^n(a) + R_{2}^n(a) + R_1^n(\Delta t, \Delta x; a, q).
			\end{equation}

\textbf{Part III.} Now we consider the term $R_{\mathcal{E}}^n(a)$  given by (\ref{R_E}).
			We start by replacing the forward and the backward difference of $\mathcal{E}_i = \operatorname*{exp}(\mu(x_i))$ by the first derivative of the function $\mathcal{E}(x):=\mathcal{E}(\mu(x)) = \operatorname*{exp}(\mu(x))$. By Taylor expansion we obtain
			\begin{align*}
				&\mathcal{E}_{i+1} - \mathcal{E}_i = \mathcal{E}_{i+1}'\Delta x - \mathcal{E}_{i+1}''\frac{\Delta x^2}{2} + \mathcal{E}'''(\xi_i^+)\frac{\Delta x^3}{6},\\
				&\mathcal{E}_i - \mathcal{E}_{i-1} = \mathcal{E}_{i-1}'\Delta x + \mathcal{E}_{i-1}''\frac{\Delta x^2}{2} + \mathcal{E}'''(\xi_i^-)\frac{\Delta x^3}{6}
			\end{align*}
for  $\xi_i^+\in[x_i, x_{i+1}]$ and  $\xi_i^-\in[x_{i-1}, x_{i}]$, where we use the notation $\mathcal{E}_j^{(k)} = \mathcal{E}^{(k)}(x_j)$.
			Inserting these in (\ref{R_E}) gives
			\begin{align}
			\label{eq:part3-1}
				R_{\mathcal{E}}^n(a) = R_{\mathcal{E},0}^n(a)\Delta x + R_{\mathcal{E},1}^n(a)\Delta x^2 + R_{\mathcal{E},2}^n(a)\Delta x^3
			\end{align}
			with the three terms
			\begin{equation*}
				\begin{aligned}
					R_{\mathcal{E},0}^n(a) &= \sum_{i=1}^M \frac{a}{2}\left(w_{i+1}^n\right)^2\mathcal{E}_{i+1}'	+ \sum_{i=1}^M \frac{a}{2}\left(w_{i-1}^n\right)^2\mathcal{E}_{i-1}',\\
					R_{\mathcal{E},1}^n(a) &= -\sum_{i=1}^M \frac{a}{4}\left(w_{i+1}^n\right)^2\mathcal{E}_{i+1}'' + \sum_{i=1}^M \frac{a}{4}\left(w_{i-1}^n\right)^2\mathcal{E}_{i-1}'' ,\\
				R_{\mathcal{E},2}^n(a) &= \sum_{i=1}^M \frac{a}{12}\left( \left(w_{i+1}^n\right)^2\mathcal{E}'''(\xi_i^+) + \left(w_{i-1}^n\right)^2\mathcal{E}'''(\xi_i^-) \right).
				\end{aligned}
			\end{equation*}
			In the following we rewrite these terms. We start by considering the first term.
			By using the derivative
			\begin{equation}\label{der_E_j}
				\mathcal{E}_j' = \frac{d}{dx} \operatorname*{exp}(\mu(x)) \big|_{x_j} = \mu_x(x_j)\operatorname*{exp}(\mu(x_j)) \overset{(\ref{discr_Bed_mu})}{=} -\frac{C_L}{a}\mathcal{E}_j
			\end{equation}
and performing an index shift we  obtain with the definition of the discrete Lyapunov function \eqref{disc_Lyapunov}
			\begin{equation}
			\label{eq:part3-2}
				R_{\mathcal{E},0}^n(a)\Delta x = -C_L \mathcal{L}^n + R_0^n\Delta x
			\end{equation}
			with the residual
			\begin{equation}\label{R_0}
				R_0^n := - \frac{1}{2}C_L\left(\left(w_0^n\right)^2\mathcal{E}_0 - \left(w_1^n\right)^2\mathcal{E}_1 - \left(w_{M}^n\right)^2\mathcal{E}_{M} + \left(w_{M+1}^n\right)^2\mathcal{E}_{M+1}\right).
			\end{equation}
			For the second term we proceed similarly.
			Using
			\begin{equation}\label{der_der_E_j}
				\mathcal{E}_j'' = \left(\mathcal{E}_j'\right)'
				\overset{(\ref{der_E_j})}{=} \left(-\frac{C_L}{a}\mathcal{E}_j\right)'
				= -\frac{C_L}{a} \mathcal{E}_j'
				\overset{(\ref{der_E_j})}{=} \left(\frac{C_L}{a}\right)^2 \mathcal{E}_j,
			\end{equation}
and performing  an index shift we get
			\begin{equation}\label{R_E_1}
				R_{\mathcal{E},1}^n(a) = \frac{C_L^2}{4a} \left(\left(w_0^n\right)^2\mathcal{E}_0 + \left(w_1^n\right)^2\mathcal{E}_1 - \left(w_{M}^n\right)^2\mathcal{E}_{M} - \left(w_{M+1}^n\right)^2\mathcal{E}_{M+1}\right).
			\end{equation}
			In the third term we replace the third derivate using \eqref{der_E_j} and \eqref{der_der_E_j} by
			\begin{equation*}
				\mathcal{E}'''(x) = \left(\mathcal{E}''(x)\right)' = \left(\frac{C_L}{a}\right)^2 \mathcal{E}'(x) = -\left(\frac{C_L}{a}\right)^3 \mathcal{E}(x)
			\end{equation*}
			to obtain
			\begin{equation}\label{R_E_2}
				R_{\mathcal{E},2}^n(a) = - \frac{1}{12} \frac{C_L^3}{a^2} \sum_{i=1}^M \left( \left(w_{i+1}^n\right)^2\mathcal{E}(\xi_i^+) + \left(w_{i-1}^n\right)^2\mathcal{E}(\xi_i^-) \right).
			\end{equation}

			Combining \eqref{eq:part2-1}, \eqref{eq:part3-1} and \eqref{eq:part3-2} gives the decay
			\begin{equation}
			\label{eq:part3-3}
					\frac{\mathcal{L}^{n+1}-\mathcal{L}^n}{\Delta t} \leq
					-C_L \mathcal{L}^n + \mathcal{R}^n
			\end{equation}
			with the residual at time $t_n$ given by
			\begin{equation}\label{residual_R_n}
				\mathcal{R}^n := R_0^n\Delta x + R_{\mathcal{E},1}^n(a)\Delta x^2 + R_{\mathcal{E},2}^n(a)\Delta x^3 + R_u^n(a)+ R_{2}^n(a) + R_1^n(\lambda, a, q),
			\end{equation}
			where $R_0^n$ is defined by \eqref{R_0}, $R_{\mathcal{E},1}^n(a)$ by  \eqref{R_E_1}, $R_{\mathcal{E},2}^n(a)$ by \eqref{R_E_2}, $R_u^n(a)$ by \eqref{R_u}, $R_{2}^n(a)$ defined by \eqref{R_2_n} is simplified to
			\begin{equation}\label{R_2_n_II}
					R_{2}^n(a)= \frac{a}{2} \sum_{i=1}^M \mathcal{E}_i \left(w_{i+1}^n - w_{i-1}^n\right) \left(w_{i+1}^n - 2w_{i}^n + w_{i-1}^n\right)
			\end{equation}
			and $R_1^n(\lambda, a, q)$ is defined by  \eqref{RestI}.

\textbf{Part IV.}
Finally, we conclude from \eqref{eq:part3-3}
			\begin{equation*}
				\mathcal{L}^n \leq (1 -C_L\Delta t)\mathcal{L}^{n-1} + \Delta t \mathcal{R}^{n-1}.
			\end{equation*}
			Since this inequality holds for all $n\geq 1$ and since
$\left(1-C_L\Delta t\right)>0$ holds
by assumption \eqref{Bed_delta_t}, we can proceed iteratively to reach
			\begin{equation}\label{discrAbklingverhalten_ohneEXP}
				\mathcal{L}^n \leq (1 -C_L\Delta t)^n \mathcal{L}^0 + \Delta t\sum_{i=1}^n (1 -C_L\Delta t)^{i-1} \mathcal{R}^{n-i}.
			\end{equation}
Since
$\operatorname*{exp}(x) \geq 1+x$ for $x\in\R$  the assertion \eqref{discr_decay_rate} follows.

	\end{proof}

	\begin{remark}\label{equality_in_discr_rate}
		If we choose the discrete control $u^n$ such that equality holds in \eqref{Bed_u_discr}, then we also obtain
equality at least in the discrete decay rate \eqref{discrAbklingverhalten_ohneEXP}.
	\end{remark}

\subsection{Multi--dimensional analysis}
\label{subsec:Analysis-multiD}

By means of the 1D result we now prove the multi--dimensional result where we apply Theorem \ref{decay_discr_Lyapunov_theorem} along each direction of
the splitting scheme \eqref{eq:FVS-split}.

\begin{proof}[Proof of Theorem \ref{thm:decay_discr_Lyapunov_theorem-multid}]
In a first step we estimate the decay rate performing time evolution for the directions $k=1,\ldots,d$ of one time step with the splitted scheme \eqref{eq:FVS-split}. According to \eqref{eq:disc_Lyapunov-multid} the corresponding discrete Lyapunov function can be written as
\begin{align}
  \mathcal{L}^{n,k} =
  \sum_{\ibj\in\calJ} \left(w_\ibj^{n,k}\right)^2\mathcal{E}_\ibj |V_\ibj| =
  \sum_{\ibj|_k\in\calJ|_k} \frac{|V|}{\Delta x_k} \sum_{j_k\in\calJ_k}  \left(w_\ibj^{n,k}\right)^2\mathcal{E}(\bx_\ibj ) \Delta x_k .
\end{align}
Here we introduce the index set
\begin{align*}
  \calJ|_k :=\{ \bj|_k =(j_1,\ldots,j_{k-1},j_{k+1},\ldots,j_d) \in\R^{d-1} \,:\, j_l\in\calJ_l ,\ l\ne k   \},\
  \calJ_k:= \{ 1,\ldots,M_k\}.
\end{align*}
  We make the convention that $(\bj|_k,j_k) = \bj=(j_1,\ldots,j_d)$. Assuming  separation
 of variables \eqref{eq:E-mu} the Lyapunov function can  be rewritten as
 \begin{align}
 \label{eq:L_nk}
  \mathcal{L}^{n,k} =
  \sum_{\ibj|_k\in\calJ|_k}  \frac{|V|}{\Delta x_k}  \prod_{l=1,\ne k}^d  \mathcal{E}_l(x_l) \mathcal{L}_{\ibj|_k}^{n,k} ,\
  \mathcal{L}_{\ibj|k}^{n,k} :=
  \sum_{j_k\in\calJ_k}  \left(w_{\ibj}^{n,k}\right)^2\mathcal{E}_k(x_{j_k} ) \Delta x_k .
 \end{align}
 Applying Theorem \ref{decay_discr_Lyapunov_theorem}
with $C_L/d$ instead of $C_L$ in \eqref{discr_Bed_mu}
 to the direction
 characterized by $\bj|_k=const$ we estimate the decay of the discrete Lyapunov function corresponding to 
 this  by
 \eqref{eq:part3-3}
  resulting in
 \begin{align*}
   \mathcal{L}_{\ibj|k}^{n,k+1} \le
   \left(1 - \frac{C_L}{d} \Delta t\right) \mathcal{L}_{\ibj|_k}^{n,k} + \Delta t \mathcal{R}^{n,k}_{\ibj|_k}
 \end{align*}
where the residual in the $k^{th}$ intermediate step of the dimensional splitting scheme at  $\bj|_k=const$ is determined similarly to \eqref{residual_R_n} by
\begin{align}
\label{residual_R_n_k_j}
& \mathcal{R}^{n,k}_{\ibj|_k} :=
R^{n,k}_{\ibj|_k,0}\Delta x_k + R_{\ibj|_k,\mathcal{E},1}^{n,k}(a_k)\Delta x_k^2 + R_{\ibj|_k,\mathcal{E},2}^{n,k}(a_k)\Delta x_k^3 + \nonumber\\
&\hphantom{\mathcal{R}^{n,k}_{\ibj|_k} :=} R_{\ibj|_k,u}^{n,k}(a_k)+ R_{\ibj|_k,2}^{n,k}(a_k) + R_{\ibj|_k,1}^{n,k}(\lambda_k, a_k, q_k).
\end{align}
Here the residuals $R_{\ibj|_k,0}^{n,k}$,  $R_{\ibj|_k,\mathcal{E},1}^{n,k}(a_k)$, $R_{\ibj|_k,\mathcal{E},2}^{n,k}(a_k)$, $R_{\ibj|_k,u}^{n,k}(a_k)$, $R_{\ibj|_k,2}^{n,k}(\ibj|_k,a_k)$ and $R_{\ibj|_k,1}^{n,k}(\lambda_k, a_k, q_k)$
are  defined in analogy to
\eqref{R_0},   \eqref{R_E_1},  \eqref{R_E_2},   \eqref{R_u},  \eqref{R_2_n_II} and  \eqref{RestI}, respectively. Note that here the time step is $\Delta t$.
The approximation
$\mathcal{L}_{\ibj|k}^{n,k+1}$ and $\mathcal{L}_{\ibj|k}^{n,k}$ do not correspond to the times  $t_{n+(k+1)/d}$
and $t_{n+k/d}$, respectively.
Then, we conclude by definition
\eqref{eq:L_nk} for the total decay of the discrete Lyapunov function at time $t_{n+k/d}$
\begin{align*}
   \mathcal{L}^{n,k+1} \le
   \left(1 - \frac{C_L}{d} \Delta t \right) \mathcal{L}^{n,k} + \Delta t \mathcal{R}^{n,k}
 \end{align*}
 with the residual
\begin{equation}
\label{residual_R_n_k}
  \mathcal{R}^{n,k} :=
  \sum_{\ibj|_k\in\calJ_k}  \frac{|V|}{\Delta x_k}  \prod_{l=1,\ne k}^d  \mathcal{E}_l(x_l) \mathcal{R}_{\ibj|_k}^{n,k}
\end{equation}
Recursively applying the estimate for $k=d,\ldots,1$ yields
\begin{align*}
  \mathcal{L}^{n+1} &\equiv \mathcal{L}^{n,d} \le
   \left(1 - \frac{C_L}{d} \Delta t \right)^d \mathcal{L}^{n,0} + \Delta t \sum_{k=1}^d  (1 - C_L \Delta t /d)^{k-1} \mathcal{R}^{n,d-k} \\
   & \equiv \left(1 - \frac{C_L}{d} \Delta t \right)^d \mathcal{L}^{n} + \Delta t \mathcal{R}^{n} .
\end{align*}
Another similar recursion over the time steps $n$ provides
\begin{align*}
  \mathcal{L}^{n} \le
  \left(1 - \frac{C_L}{d} \Delta t \right)^{n d} \mathcal{L}^0 + \Delta t \sum_{i=1}^n  \left(1 - \frac{C_L}{d} \Delta t \right)^{d(i-1)} \mathcal{R}^{n-i} .
\end{align*}
From this the assertion \eqref{eq:discr_decay_rate-multid} follows.

\end{proof}

\section{Numerical results}
\label{sec:SimRes}

In this section we confirm numerically the influence of viscosity on the decay rate of the Lyapunov function in the one-dimensional case. We then extend the investigations to two dimensions in order to verify the multidimensional decay rate of Theorem \ref{thm:decay_discr_Lyapunov_theorem-multid}.
In this context, we investigate the control for our test cases and show how the control 
{enforces the decay of the solution.}

\subsection{Investigations in the 1D case}
\label{subsec:Analysis1D}

	To verify  numerically the decay rate \eqref{discr_decay_rate} of the discrete Lyapunov function \eqref{disc_Lyapunov} given in  Theorem \ref{decay_discr_Lyapunov_theorem},
	we consider
	IBVP \eqref{1D_IBVP}	
	in the domain $\Omega= (0, 1)$ using advection velocity  $a=2$.
	Simulations are run until the final time $T=3$.
	For initial condition we choose
	\begin{equation}\label{w0_1D}
		w_0(x) = \operatorname*{sin}(2\pi x).
	\end{equation}

	For an  admissible function $\mu$ in Theorem \ref{decay_discr_Lyapunov_theorem} satisfying
	\eqref{discr_Bed_mu}  we choose
	\begin{equation}\label{1D_mu_testeq}
		\mu(x) =  -\frac{C_L}{a} x,\qquad C_L=3.
	\end{equation}

	Since the advection velocity is positive, we observe $w_{M+1}^n$ in the ghost cell at the right boundary and apply the corresponding control $u^n = u^n\!\left(w_{M+1}^n\right)$ for each time step at the ghost cell in the left boundary by setting $w_0^n = u^n$, see \eqref{Bed_BC_discr}.
	The control $u^n$
	is chosen such that we have  equality
in \eqref{Bed_u_discr}, i.e., we set
	\begin{equation}\label{chosen_control_1D}
		u^n = \sqrt{\left(w_{M+1}^n\right)^2 \frac{\mE_M + \mE_{M+1}}{\mE_0 + \mE_1}}.
	\end{equation}
	This ensures also equality in the discrete decay rate \eqref{discrAbklingverhalten_ohneEXP} of Theorem \ref{decay_discr_Lyapunov_theorem}, i.e., the expected decay rate is predicted exactly, see Remark \ref{equality_in_discr_rate}.

	We run several computations for different 3--point schemes \eqref{3point_scheme} with
numerical viscosity coefficients $q\in\{ (\lambda a)^2, \lambda a, 1 \}$, respectively.	For the time step we choose $\dt = \operatorname*{CFL} \, a/\dx$ with $\operatorname*{CFL}=0.5$
	satisfying \eqref{Bed_delta_t} in Theorem \ref{decay_discr_Lyapunov_theorem}, i.e., $\Delta t = \Delta x$ and $\lambda = \Delta t /\Delta x =1$.

	 The decay of the resulting computed discrete Lyapunov function $\mathcal{L}^n$ is shown in Figure \ref{fig:disc_Lyapunov}.
	The decay rates are compared to the discrete decay rate without residual corresponding to the discretized exact decay rate of
	Theorem \ref{Hauptresultat} and of
	Lemma \ref{la:decay_pde_with_diffusion} in the case $\bar{q}=0$,
	i.e., without the additional diffusion,
	and to the decay rates in \eqref{discr_decay_rate} and \eqref{discrAbklingverhalten_ohneEXP} from Theorem \ref{decay_discr_Lyapunov_theorem} and its proof, respectively.
	For the implementation of the residual \eqref{R_E_2} in \eqref{discr_decay_rate} and \eqref{discrAbklingverhalten_ohneEXP} we use the upper bound
	\begin{equation*}
		R_{\mE,2}^n(a) \leq -\frac{1}{12}\frac{C_L^3}{a^2} \sum_{i=1}^M \left(\left(w_{i+1}^n\right)^2\mE_{i+1} + \left(w_{i-1}^n\right)^2\mE_{i}\right).
	\end{equation*}
	Note that $\mE(\mu(x))$ is monotonically decreasing in $x$ for the function $\mu$ determined by \eqref{1D_mu_testeq}.
	In the figures we plot only $10$ out of all time steps.

	\begin{figure}[htbp]
		\centering
		\includegraphics[trim = 1.0cm 14.5cm 2.0cm 7.5cm, clip,width=0.82\textwidth]{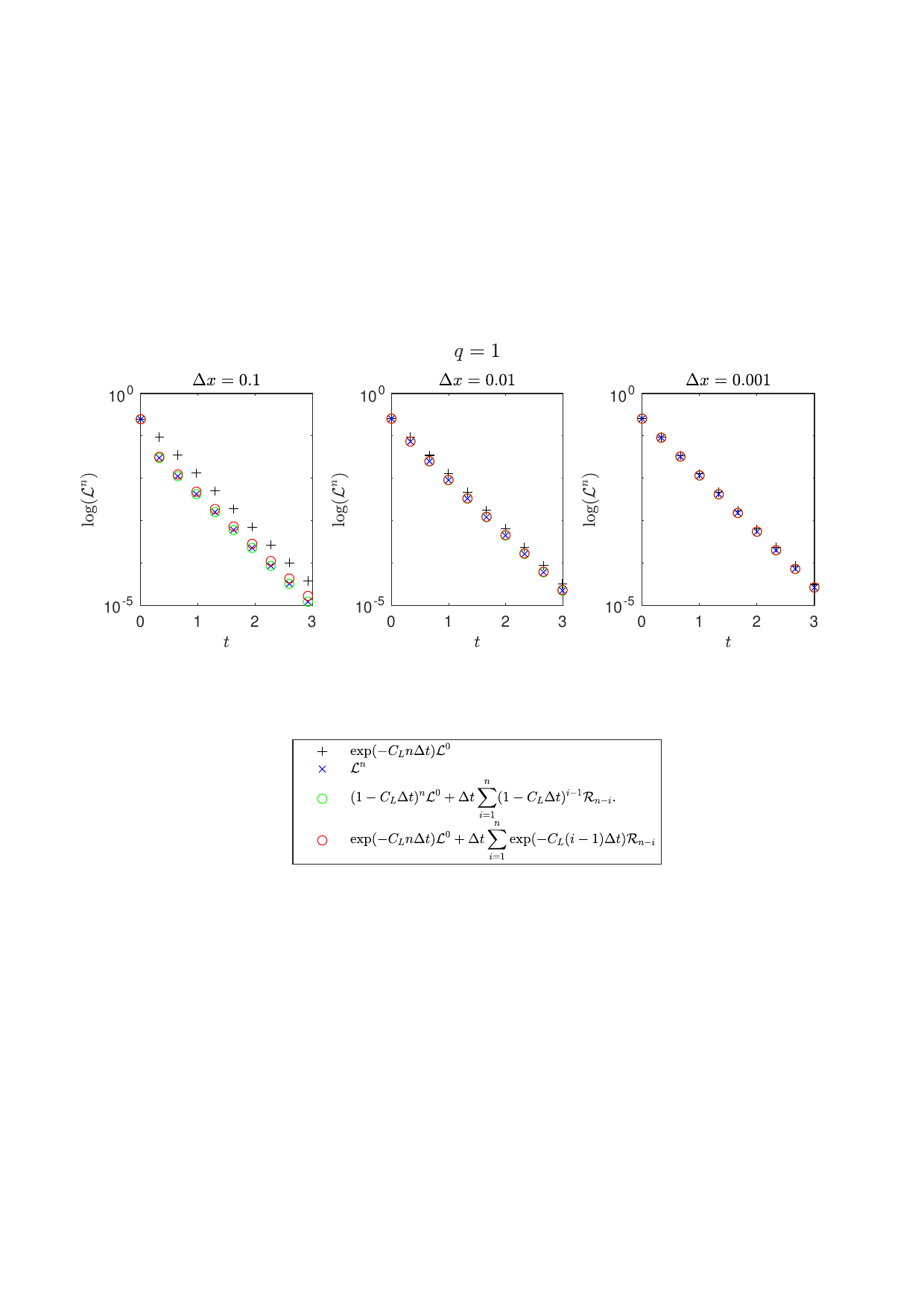}
		\includegraphics[trim = 1.0cm 14.5cm 2.0cm 7.5cm, clip,width=0.82\textwidth]{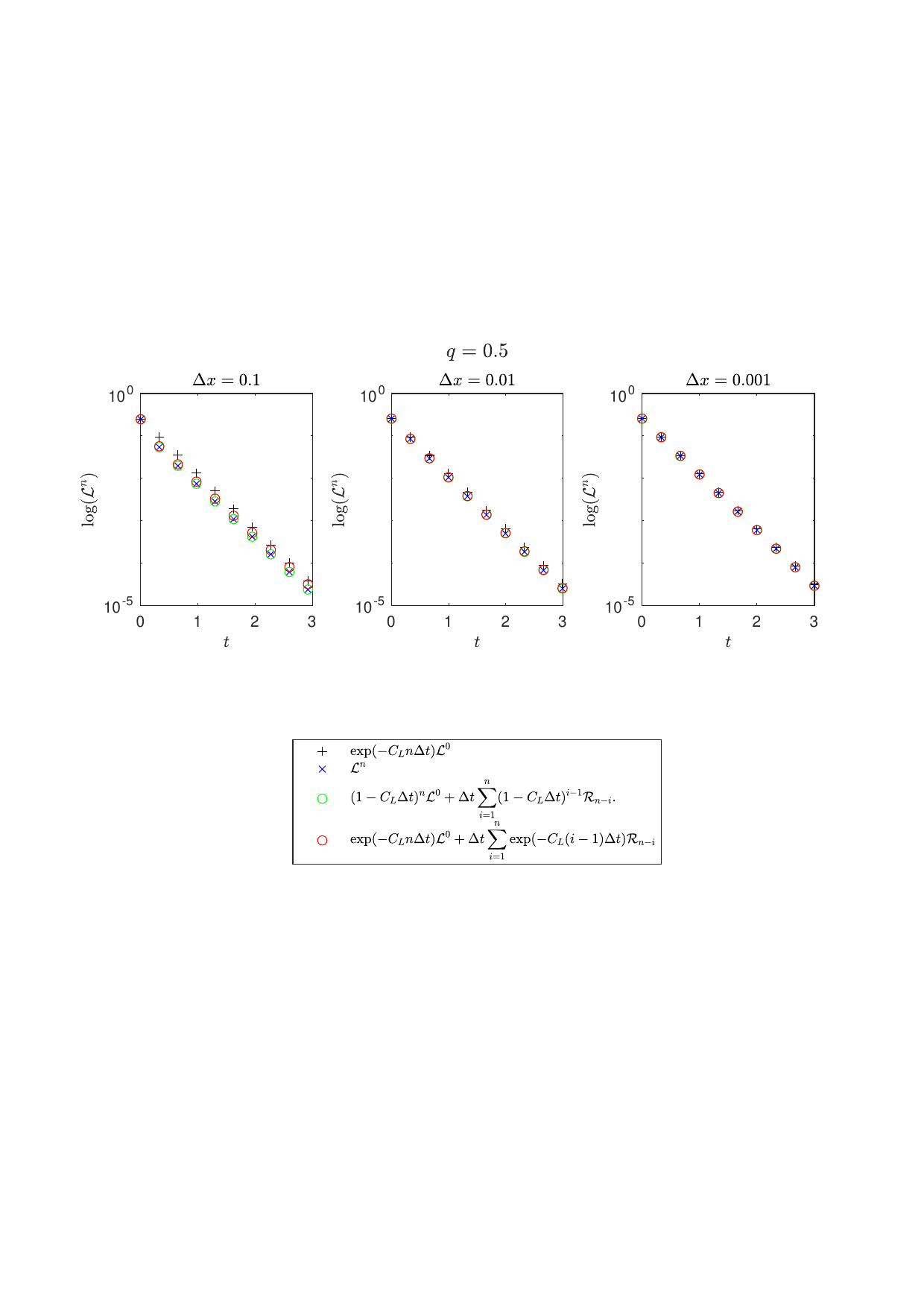}
		\includegraphics[trim = 1.0cm 8cm 2.0cm 7.5cm, clip,width=0.82\textwidth]{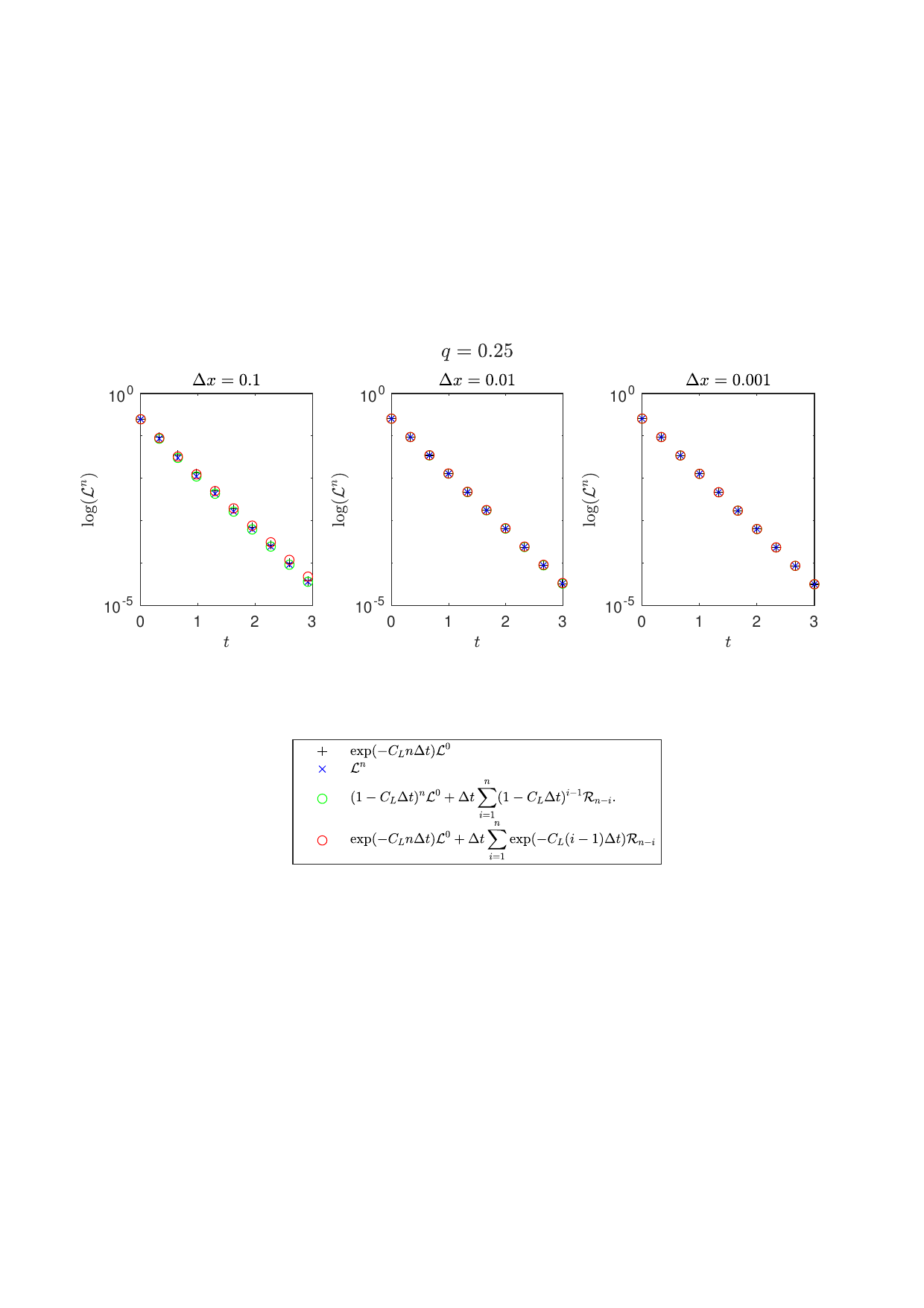}
		\caption{
			Decay of the discrete Lyapunov function $\mathcal{L}^n$ for schemes
with $q=1$ (top), $q=\lambda a=0.5$ (middle) and $q=(\lambda a)^2=0.25$ (bottom) and fixed $\operatorname*{CFL} = 0.5$ compared to the discretized decay rate without diffusion $\operatorname*{exp}(-C_L n\dt)\mathcal{L}^0$ and the discrete decay rates of Theorem \ref{decay_discr_Lyapunov_theorem}.
Results are shown for $10$ equidistantly distributed time steps in $[0,T]$.
}
		\label{fig:disc_Lyapunov}
	\end{figure}

\textbf{Decay rates.} 
We consider the decay rates of the different schemes.
	For $q=1$ we observe that the computed discrete Lyapunov function $\mathcal{L}^n$ converges for $\dx\to 0$ to the decay rate  $\operatorname*{exp}(-C_L n\dt)\mathcal{L}^0$
	as expected according to
	the statement at the end of Sec.~\ref{subsec:Analysis}.
	The decay rate in \eqref{discrAbklingverhalten_ohneEXP} is always a very good approximation of the computed discrete Lyapunov function.
	Since \eqref{discr_decay_rate}  follows from \eqref{discrAbklingverhalten_ohneEXP}, the  right-hand side in \eqref{discr_decay_rate}
	overestimates the computed decay rate of $\mathcal{L}^n$ slightly, but for $\dx\to 0$ this rate coincides with \eqref{discrAbklingverhalten_ohneEXP} and both converge to the rate $\operatorname*{exp}(-C_L n\dt)\mathcal{L}^0$.

	For  $q=\lambda a < 1$ the results are similar to $q=1$, except that the decay rates of $\mathcal{L}^n$, (\ref{discrAbklingverhalten_ohneEXP}) and \eqref{discr_decay_rate} are now much closer to $\operatorname*{exp}(-C_L n\dt)\mathcal{L}^0$ for all three choices of $\dx$.

	For $q = \left(\lambda a\right)^2$ and $\dx=0.1$ we can see that \eqref{discr_decay_rate} is even above $\operatorname*{exp}(-C_L n\dt)\mathcal{L}^0$.
	However, for $\dx\leq 0.01$, all rates coincide.
	We already expected this due to the results of Section
	\ref{sec:Proof-1D}.
In this case,
we have a second-order scheme for $q = (\lambda a)^2$.

	In summary, the rates
 \eqref{discrAbklingverhalten_ohneEXP} and also \eqref{discr_decay_rate} approximate the computed decay rate of the Lyapunov function much better than the analytical of
	Theorem \ref{Hauptresultat} and of
	Lemma \ref{la:decay_pde_with_diffusion} in the case $\bar{q}=0$.
	In addition, we found that the decay rate of the computed Lyapunov function is closer to $\operatorname*{exp}(-C_L n\dt)\mathcal{L}^0$ for a smaller numerical viscosity $q$.
	Therefore, diffusion
 causes the Lyapunov function to decay faster.
	If we decrease $q$ to the lower limit $q =\left(\lambda a\right)^2$ for which scheme %
\eqref{3point_scheme}
	is still stable, the rates even coincide for sufficiently small $\dx$.

	In Table \ref{tab:EoC_q} we compare the empirical order of convergence (EoC) of the discrete Lyapunov function at time 
$T=3$
for the refinement from $\dx=0.01$ to $\dx=0.001$. We obtain the largest EoC for $q = (\lambda a)^2$ and the smallest for $q=1$.
	Obviously, diffusion has a strong  influence on the order of convergence, because  for $q=\lambda a$ and $q=1$ the scheme is only  first--order accurate whereas it is second--order accurate for  $q = (\lambda a)^2$ .

	\begin{table}[htbp]
		\centering
		\begin{tabular}{ c||c|c|c }
			$q$ & $\left(\lambda a\right)^2 $ & $\lambda a$ & $1$\\
			\hline
			EoC & 0.8844 & 0.4640 & 0.2796 \\
		 \end{tabular}
		\caption{Computed empirical order of convergence of the discrete Lyapunov function for different $q$ at $T=3$.
}
		\label{tab:EoC_q}
	\end{table}

\textbf{Control and residual}
We investigate the discrete control $\left(u^n\right)_{n=0,\dots,N}$ and the residual $\left(\mathcal{R}_n\right)_{n=0,\dots,N}$.
	We expect the control $\left(u^n\right)_{n=0,\dots,N}$ to decrease over time.

	\begin{figure}[!htbp]
		\centering
		\subfloat[\centering $q=1$]{{\includegraphics[ width=0.48\textwidth]{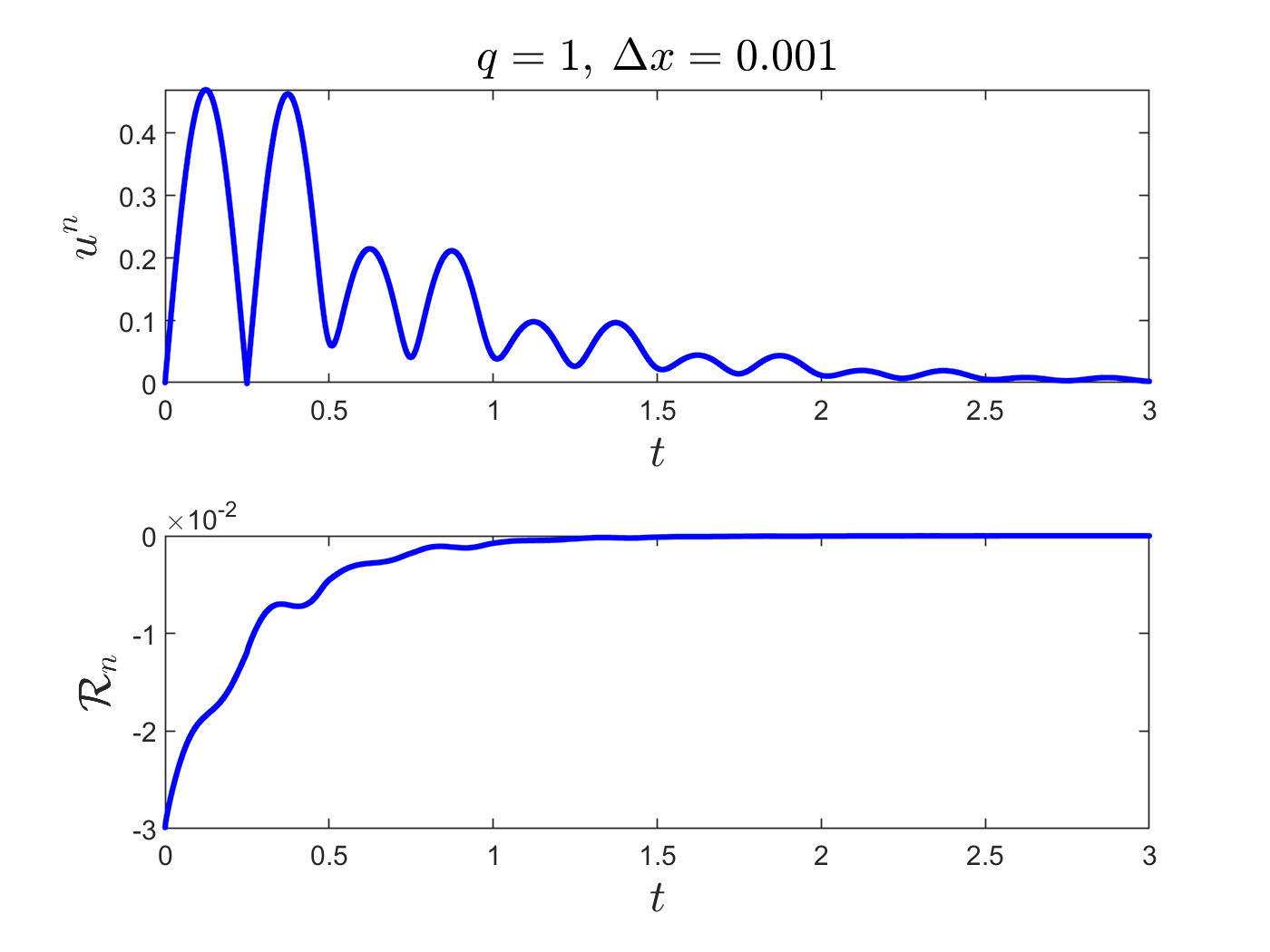} }}
		\
		\subfloat[\centering $q=\left(\lambda a\right)^2$]{{\includegraphics[width=0.48\textwidth]{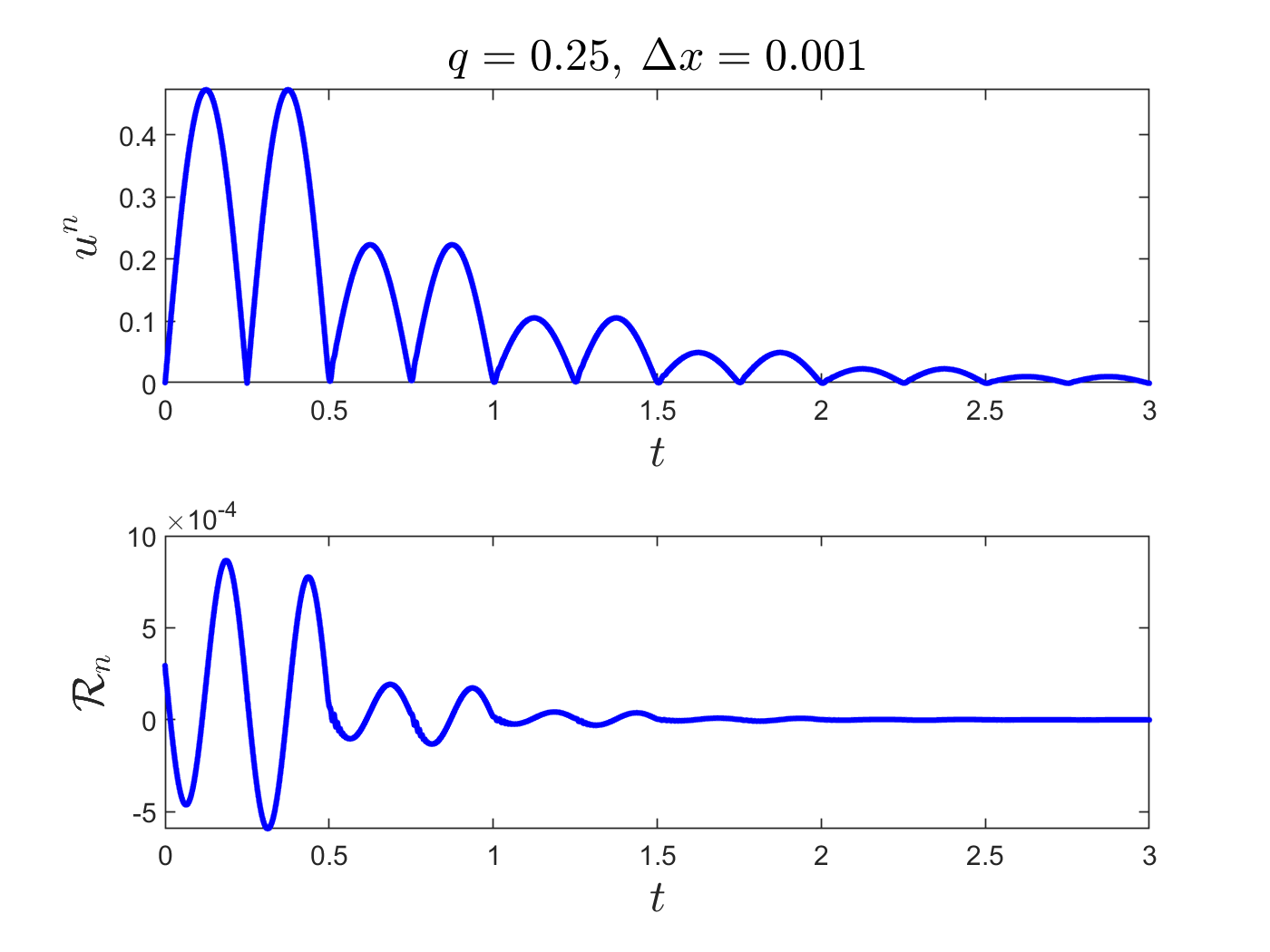} }}
		\caption{Plots of the discrete control $\left(u^n\right)_{n=0,\dots,N}$ and the residual $\left(\mathcal{R}_n\right)_{n=0,\dots,N}$ for different values of $q$
with $\operatorname*{CFL}=0.5$ and $\dx = 0.001$.
}
		\label{fig:disc_Lyapunov_u_Rn}
	\end{figure}

	We compare the above simulations  with the finest resolution, i.e., $\dx = 0.001$.
	Here, the control $\left(u^n\right)_{n=0,\dots,N}$ in Figure \ref{fig:disc_Lyapunov_u_Rn} can be explained by the initial data \eqref{w0_1D}, which is transported to the right boundary with velocity $a=2$.
	Based on the boundary values at this boundary, the control is chosen by \eqref{chosen_control_1D}.
	Note that $u^n$ is chosen as a positive value and
	\begin{equation*}
		\sqrt{\frac{\mE_M + \mE_{M+1}}{\mE_0+\mE_1}} < 1.
	\end{equation*}
	The resulting oscillations of $\left(u^n\right)_{n=0,\dots,N}$ can be observed particularly well by the plot for the second--order method with $q=\left(\lambda a\right)^2$, see the upper right plot in Figure \ref{fig:disc_Lyapunov_u_Rn}.

	The residual $\mathcal{R}_n$ also vanishes as 
$t_n\to T$.
	For the second--order scheme with $q=\left(\lambda a\right)^2$, $\left(\mathcal{R}_n\right)_{n=0,\dots,N}$ is smaller by two orders of magnitude than for the first--order scheme with $q=1$.

\subsection{Multi--d stabilization using higher order schemes}
\label{subsec:MultidHighOrder}

To perform higher order computations in the multi--dimensional case we adapted {the in--house C++--library \textit{MultiWave} to the
IBVP \eqref{eq:IBVP}.
This library provides a multi--dimensional high order Runge--Kutta discontinuous Galerkin scheme with grid adaptation that can be applied for solving hyperbolic balance laws, see \cite{multiwave}.

\textbf{Solver.} We briefly summarize the main ingredients of \textit{MultiWave}.
Here we apply a $P^{th}$--order DG scheme using  piecewise polynomial elements of order $P=1,2,3$
and a $P^{th}$--order explicit SSP--Runge--Kutta method with $P$ stages for the time discretization.
For a numerical flux we choose the local Lax-Friedrichs flux that for our linear fluxes coincides with the classical Lax--Friedrichs flux. 
Thus, in case of $P=1$ the scheme reduces to a 3-point scheme as \eqref{eq:FVS-split}. 
The Gibbs phenomena near to discontinuities is suppressed by the minmod limiter from \cite{Cockburn:1998jt}
for $P\ge 2$. 
Because of  the explicit time discretization we restrict the 
time step by means of a CFL condition.
The efficiency of the scheme is improved by local grid adaption where we employ the
multiresolution concept based on multiwavelets.
The key idea is to perform a multiresolution analysis on a sequence of
nested grids providing a decomposition of the data on a coarse scale and a sequence
of details that encode the difference of approximations on
subsequent resolution levels. The detail coefficients become small when the underlying
data are locally smooth and, hence, can be discarded when dropping below a level--dependent
threshold value $\varepsilon_l = 2^{l-\Lmax} c_{thresh} h_{\Lmax}$ where $\Lmax$ denotes the largest refinement level and $h_{\Lmax}$ the smallest cell diameter on this level. The parameter $c_{thresh}$ is chosen proportional to the smallest amplitude in the initial data.
By means of the thresholded sequence a new, locally refined grid
is determined. 
Details can be found in
\cite{HovhannisyanMuellerSchaefer-2014,GerhardIaconoMayMueller-2015,GerhardMueller-2016,GerhardMuellerSikstel:2021}.\\
For all examples, we set the CFL number to $CFL=0.7$. The
cell size  at the coarsest refinement level is 
$\Delta\mathbf{x}_0 =(1/3,1/3)$. 
Due to the dyadic grid hierarchy in \textit{MultiWave} the cell size at refinement level $l$ is $\Delta\mathbf{x}_l = 2^{-l}\Delta\mathbf{x}_0$, for $l=0,\dots,\Lmax$.

\textbf{Configuration.}
	In order to have
a direct comparison to the results reported in \cite{case_study}, we investigate a configuration from this reference.
	This allows  to verify the implementation of the boundary feedback control.

	For the bounded domain $\Omega = (0, 1)\times (0, 1) \subset\R^2$ we consider the  IBVP \eqref{eq:IBVP} with
	\begin{align*}
		\V{A}^{(1)} :=
		\begin{pmatrix}
			4 & 0 \\ 0 & 2
		\end{pmatrix}
		,\quad
		\V{A}^{(2)} :=
		\begin{pmatrix}
			2 & 0 \\ 0 & -2
		\end{pmatrix}
		\quad\text{and}\quad \V{B}\equiv\V{0}.
	\end{align*}
To describe the inflow boundaries and corresponding boundary conditions we introduce
	 $\V{a}_i := (a_{ii}^{(1)}, a_{ii}^{(2)})$, i.e.,
	we have $\V{a}_1 = (4, 2)$ and $\V{a}_2 = (2,-2)$.
	By the inner products $\V{a}_i(\V{x})\cdot\V{n}(\V{x})$ at the boundary, we  partition the boundary $\partial\Omega$ into the subsets
	\begin{align*}
		& \Gamma_1^- = \{0\}\times [0, 1]\cup [0, 1]\times\{0\} \quad\text{and}\quad \Gamma_1^+ = \{1\}\times [0, 1] \cup [0, 1]\times\{1\},\\
		&\Gamma_2^- = \{0\}\times [0, 1]\cup [0, 1]\times\{1\} \quad\text{and}\quad \Gamma_2^+ = [0, 1]\times\{0\}\cup\{1\}\times [0, 1]
	\end{align*}
	for the first component $w_1$ as well as
	for the second component $w_2$.
	As in \cite{case_study}, we divide $\Gamma_i^-$ into the subsets
	\begin{equation*}
		\mathcal{C}_1 = \mathcal{C}_2 = \{0\}\times [0, 1],\quad
		\mathcal{Z}_1 = [0, 1]\times\{0\} \quad\text{and}\quad \mathcal{Z}_2 = [0, 1]\times\{1\}.
	\end{equation*}
	At $\mathcal{C}_i$ we prescribe the boundary values by the control
	\begin{align}
	\label{integral_for_testcontrol}
		u(t) = \sqrt{\frac{1}{6 (e-1)} I(t)}, \quad
		I(t) = \sum_{i=1}^2 \int_{\Gamma_i^+} w_i^2 (\V{a}_i\cdot\V{n}) \operatorname*{exp}(\mu_i(\V{x}))\:d\boldsymbol{\sigma}.
	\end{align}
	This control can be computed from condition \eqref{eq:Bed_u} with ``$=$'' instead of ``$\le$'' under the restriction $u_i(t,x) = u(t)$ for $i\in\{1,2\}$.
	Following \cite{case_study}  we  use the functions
	\begin{equation*}
		\mu_1(\V{x}) = x_2 - \left(\frac{1}{2} + \frac{1}{4} C_L\right)x_1\quad\text{and}\quad \mu_2(\V{x}) = x_2 + \left(1 - \frac{1}{2} C_L\right)x_1
	\end{equation*}
	in \eqref{integral_for_testcontrol} determined by the condition \eqref{eq:Bed_mu}.
	In the computations  we fix $C_L = 3$.

	The initial conditions are given by the smooth function
	\begin{equation}\label{w0_paper}
		w_i(0, \V{x}) = \operatorname*{sin}(2 \pi x_1) \operatorname*{sin}(2 \pi x_2),\
		i=1,2.
	\end{equation}

\textbf{Decay rate.}
The adaptive DG solver provides piecewise polynomial data, i.e., on each element  in the adaptive grid corresponding to the index set $\calJ$ we have a polynomial of order $P$. For the discretization of the Lyapunov function \eqref{eq:Lyapunov} we apply a Gauss--Legendre quadrature rule on each cell with $r=(P+1)^d$ sample points, i.e.,
	\begin{align}
	\label{L_hat}
	\hat{L}(t) = 		\sum_{C\in\:\calJ} \frac{\left\lvert C\right\rvert}{4} \sum_{i_1=0}^P\left( \sum_{i_2=0}^P \left(\V{w}(t, (x_{i_1}^C, x_{i_2}^C))\right)^T\mE((x_{i_1}^C, x_{i_2}^C))\V{w}(t, (x_{i_1}^C, x_{i_2}^C))\alpha_{i_2} \right)\alpha_{i_1} \hat{L}(t).
	\end{align}

	We perform two computations with $P=1$ and $P=2$ using constant or linear polynomials. Note that for $P=1$ the DG scheme  is the classical
Lax-Friedrichs finite volume scheme. 	The resulting approximated Lyapunov functions $\hat{L}(t)$ are compared to the predicted decay rate $\exp(-C_L t) L(0)$ given in \eqref{eq:AbklingrateBeweis} in Figure \ref{fig:LyapunovP1_P2_opt}.
	\begin{figure}[htbp]
		\centering
		\includegraphics[width=0.4\textwidth]{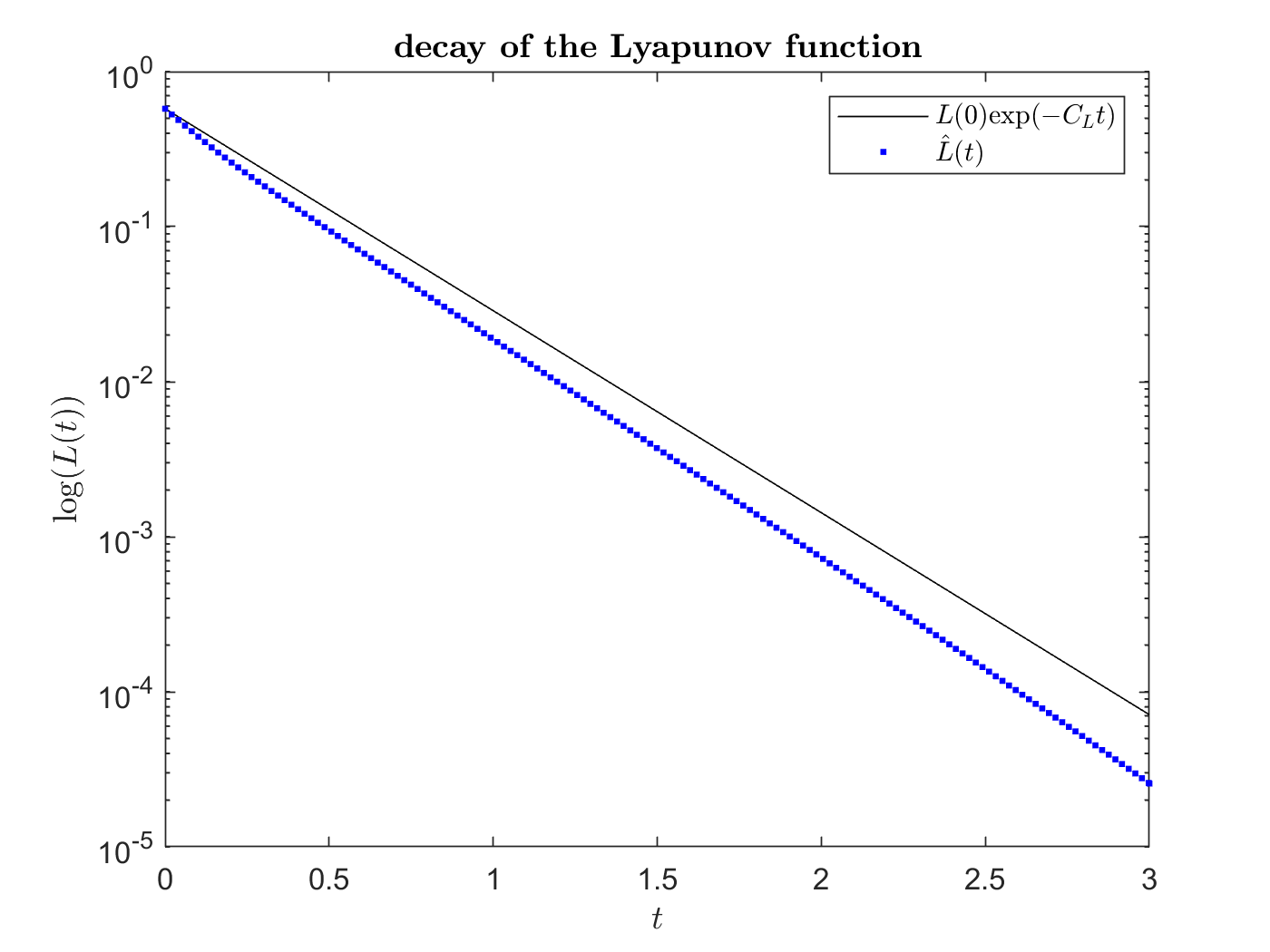}
		\includegraphics[width=0.4\textwidth]{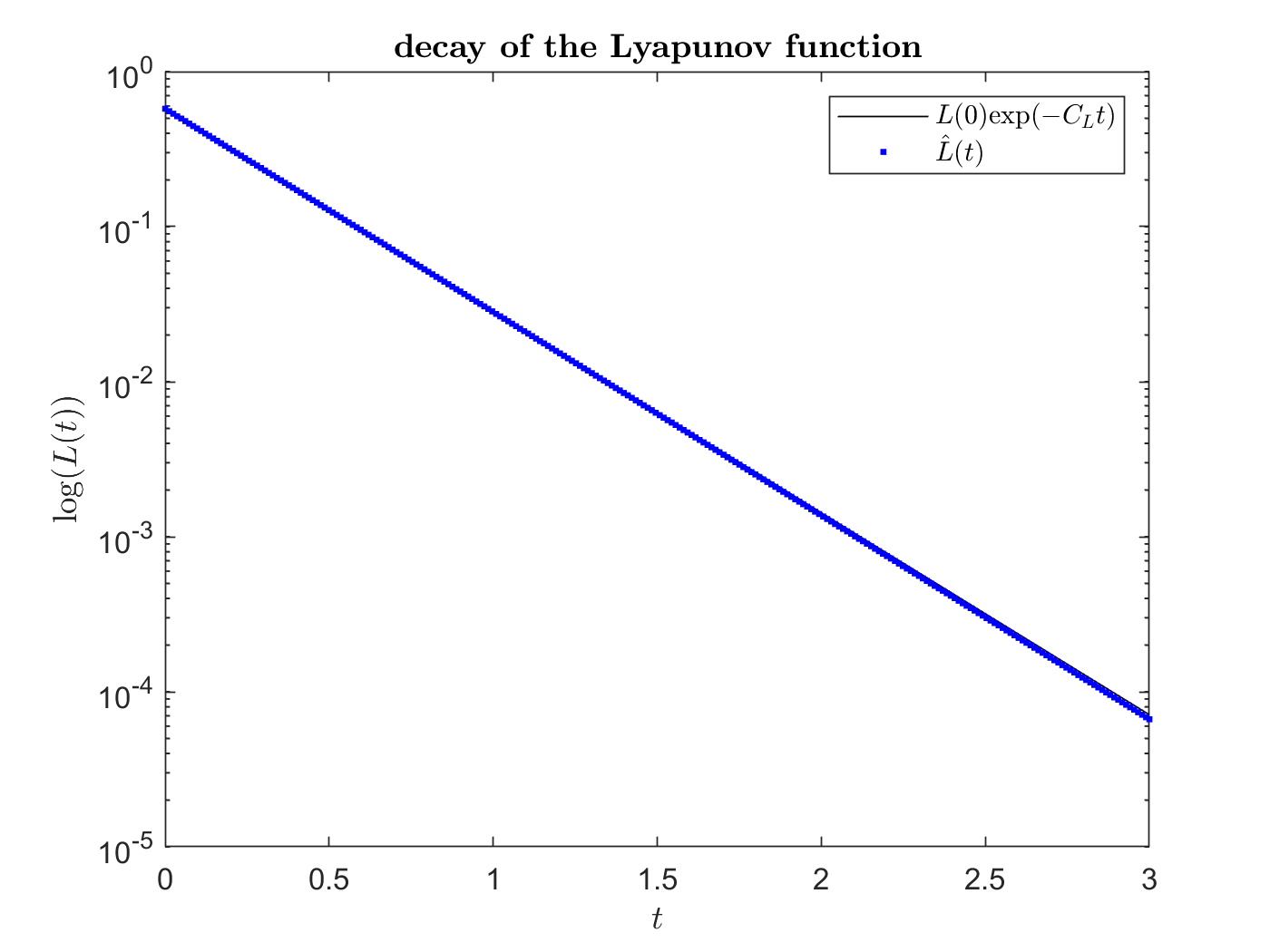}
		\caption{Decay of the approximated Lyapunov function $\hat{L}(t)$ for a simulation with $P=1$ (left) and $P=2$ (right), $\operatorname*{CFL} = 0.7$, $\Lmax=6$ and $c_{thresh} = 0.1$ compared to the exact decay rate $L(0)\operatorname*{exp}(-C_L t)$ in Theorem \ref{Hauptresultat}.}
		\label{fig:LyapunovP1_P2_opt}
	\end{figure}

	Our computations verify that the computed Lyapunov function decays faster than it is predicted analytically in Theorem \ref{Hauptresultat} due to higher numerical dissipation.
	Moreover,  the decay rate of the Lyapunov function for the second--order DG scheme
	is closer to the exact one than the one for the first--order  scheme. This confirms the findings in
	\cite{case_study} where the computations are performed using first--order and second--order finite volume schemes. This is in agreement with our analysis in Section
	\ref{subsec:Analysis} and corresponding numerical investigations in the Sections \ref{sec:Proof-1D} and	\ref{subsec:Analysis-multiD}
	and corresponding numerical results in Section \ref{subsec:Analysis1D}.

\textbf{Rate of convergence of the Lyapunov function.}
To investigate the order of convergence of $\hat{L}(t)$ to the exact rate
$L(0)\operatorname*{exp}(-C_L t)$ in Theorem \ref{Hauptresultat}
 we perform computations for varying  maximum refinement level $\Lmax \in\{3,\dots,8\}$ for different polynomial order $P\in\{1,2,3\}$. The empirical order of convergence is determined at the final time 
$T=3$.

In Figure \ref{fig:convergenceP1-P23} we plot  the approximated rates of convergence of 
$\hat{L}(T)$ 
for the different maximum refinement levels over the smallest possible area of an element on refinement level $l$  determined by
$A(l) := \left\lvert \Omega\right\rvert \left(\frac{1}{3\cdot 2^l}\right) ^2$.
For $P=1$ the asymptotic regime is reached only for higher refinement levels whereas for $P=2,3$, the asymptotic behavior can be observed also for lower refinement levels.
	The rates for $P=2$ and $P=3$ look similar, but the rate for $P=3$ seems to be a bit better.
	As can be expected, both rates are larger than in the case of $P=1$.

	\begin{figure}[htbp]
		\centering
		\includegraphics[width=0.5\textwidth]{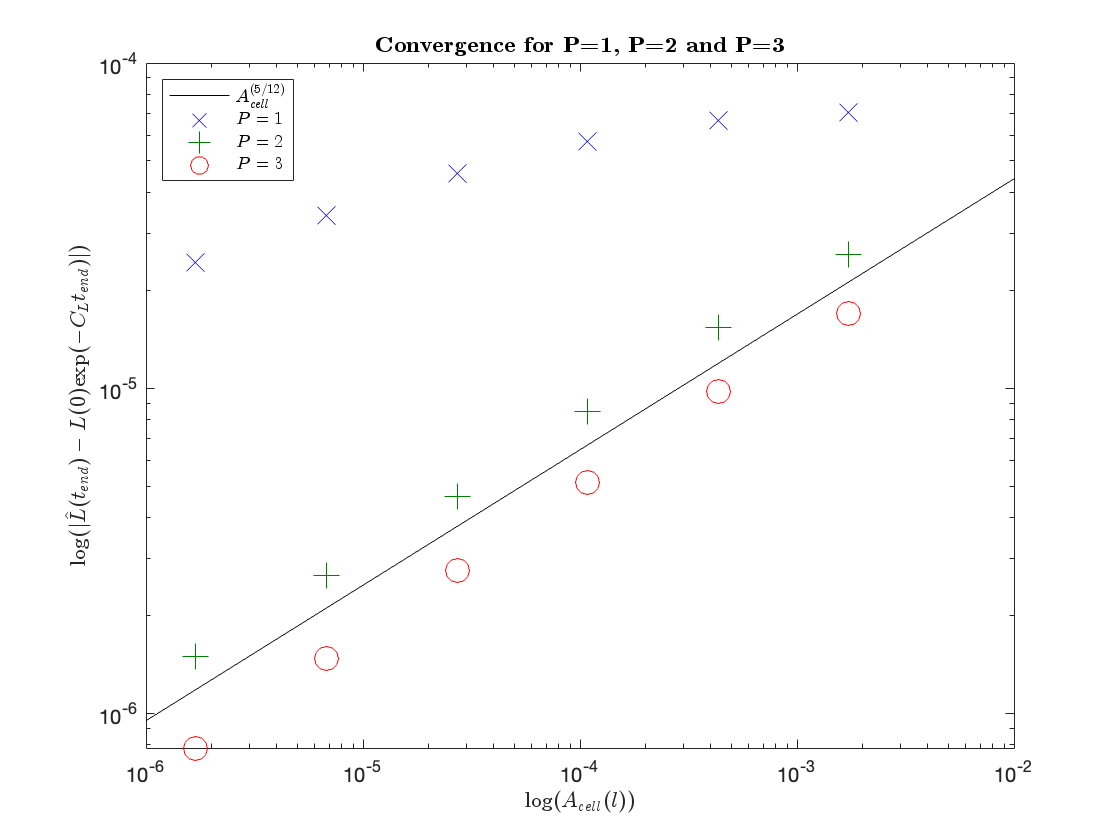}
		\caption{
			Convergence of the approximated Lyapunov function $\hat{L}(t)$ for different maximum refinement levels for simulations with $P=1$, $P=2$ and $P=3$ with $\operatorname*{CFL} = 0.7$ with grid adaptation ($c_{thresh} = 0.1$) to the exact one $L(0)\operatorname*{exp}(-C_L t)$ at 
$T=3$.
}
		\label{fig:convergenceP1-P23}
	\end{figure}

	Next we compute the empirical order of convergence (EoC) for the three cases.
	The EoC for levels $l$ and $l+1$ at time $t$ is given by
	\begin{equation}
	\label{OrderOfConvergence}
		p = \text{log}_{\frac{A(l)}{A(l+1)}} \frac{\left\lvert \hat{L}_{l}(t) - L(t) \right\rvert}{\left\lvert \hat{L}_{{l+1}}(t) - L(t) \right\rvert}=
		\text{log}_4 \frac{\left\lvert \hat{L}_{l}(t) - L(t) \right\rvert}{\left\lvert \hat{L}_{{l+1}}(t) - L(t) \right\rvert}
		= \frac{\operatorname*{ln}\frac{\left\lvert \hat{L}_{l}(t) - L(t) \right\rvert}{\left\lvert \hat{L}_{{l+1}}(t) - L(t) \right\rvert}}{\operatorname*{ln} 4}.
	\end{equation}
	Here $\hat{L}_{l}(t)$ denotes the approximated Lyapunov function $\hat{L}(t)$ at time $t$ for a simulation with a maximum refinement level $l$ and $L(t) = L(0)\operatorname*{exp}(-C_L t)$ is the value of the exact Lyapunov function given in Theorem \ref{Hauptresultat}.
	The resulting EoC for varying maximum refinement level is presented in Table \ref{tab:OrderOfConvergence}.

	\begin{table}[htbp]
		\centering
		\begin{tabular}{ c||c|c|c|c|c  }
			& $\Lmax=3 $ & $\Lmax=4$ & $\Lmax=5$ & $\Lmax=6$ & $\Lmax=7$\\
			\hline
			$P=1$ & 0.0397 & 0.1061 & 0.1665 & 0.2116 & 0.2382 \\
			$P=2$ & 0.3714 & 0.4311 & 0.4329 & 0.4036 & 0.4129 \\
			$P=3$ & 0.3984 & 0.4640 & 0.4465 & 0.4530 & 0.4561 \\
		 \end{tabular}
		\caption{Computed empirical order of convergence (\ref{OrderOfConvergence}) of the approximated Lyapunov function $\hat{L}(t)$ for different maximum refinement levels $l$ at 
$T=3$.
}
		\label{tab:OrderOfConvergence}
	\end{table}

	We note that the approximated Lyapunov function converges very slowly to the exact rate. Thus,  we need to significantly increase the maximum refinement level to get a good approximation of the exact Lyapunov function and to obtain a good approximation of the EoC.
	The EoC improves with increasing order $P$.

No regular dependence of $P$ on the rate of convergence can be observed.
This confirms the statement
at the end of Sec.~\ref{subsec:Analysis}.
	On the one hand, the numerical scheme introduces a discretization error and a projection error, since we approximate the solution with functions from the DG space.
	On the other hand, we only consider the approximated Lyapunov function $\hat{L}(t)$ instead of \eqref{eq:Lyapunov} introducing a quadrature error.
	These errors combined might dominate.

\textbf{Boundary feedback control.}
	To understand the behavior of the boundary feedback control
	we consider the computed solution $\V{w}$
	at time $t\in[0.0, 0.6]$.
	We know that the decay of the Lyapunov function guarantees that the solution decays.
	\begin{figure}[htbp]
		\centering
		\includegraphics[trim = 0cm 5cm 0cm 0cm, clip, width=0.83\textwidth]{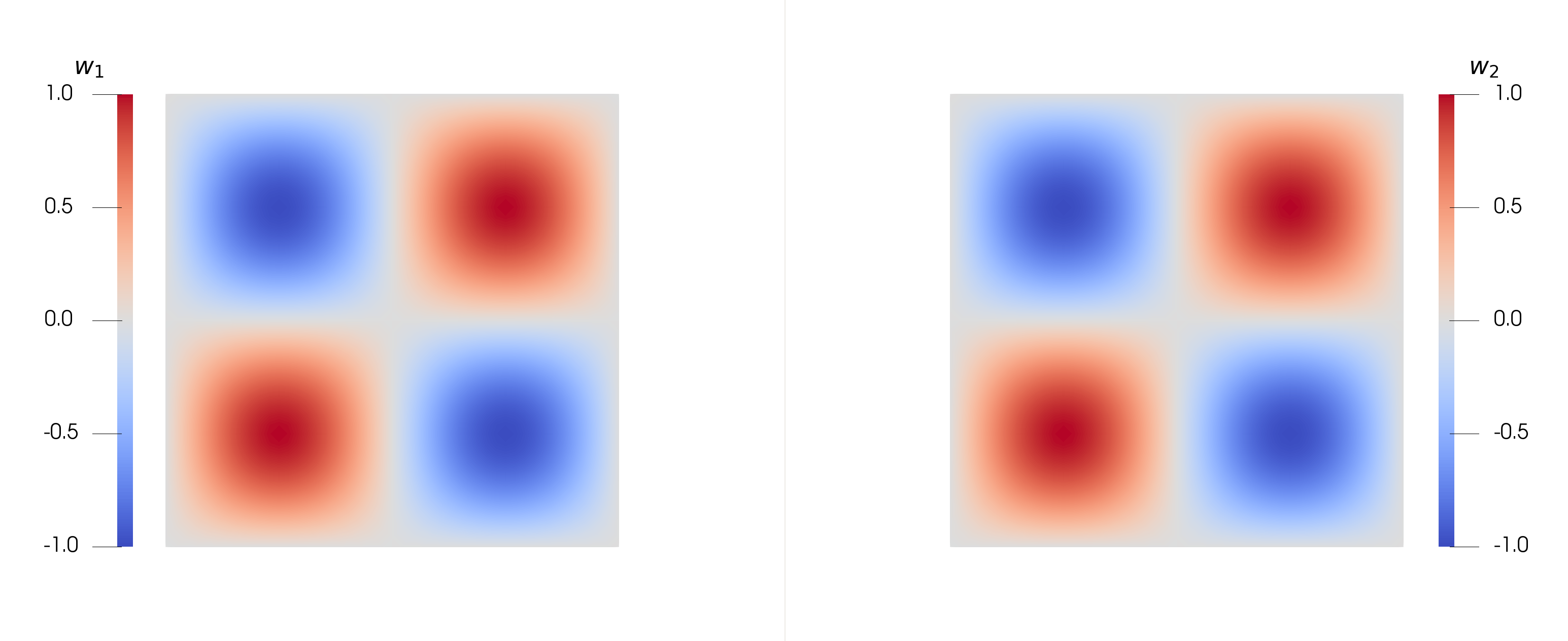}
		\caption*{$t=0.0$}
		\vspace{0.2cm}
		\includegraphics[trim = 0cm 5cm 0cm 0cm, clip, width=0.83\textwidth]{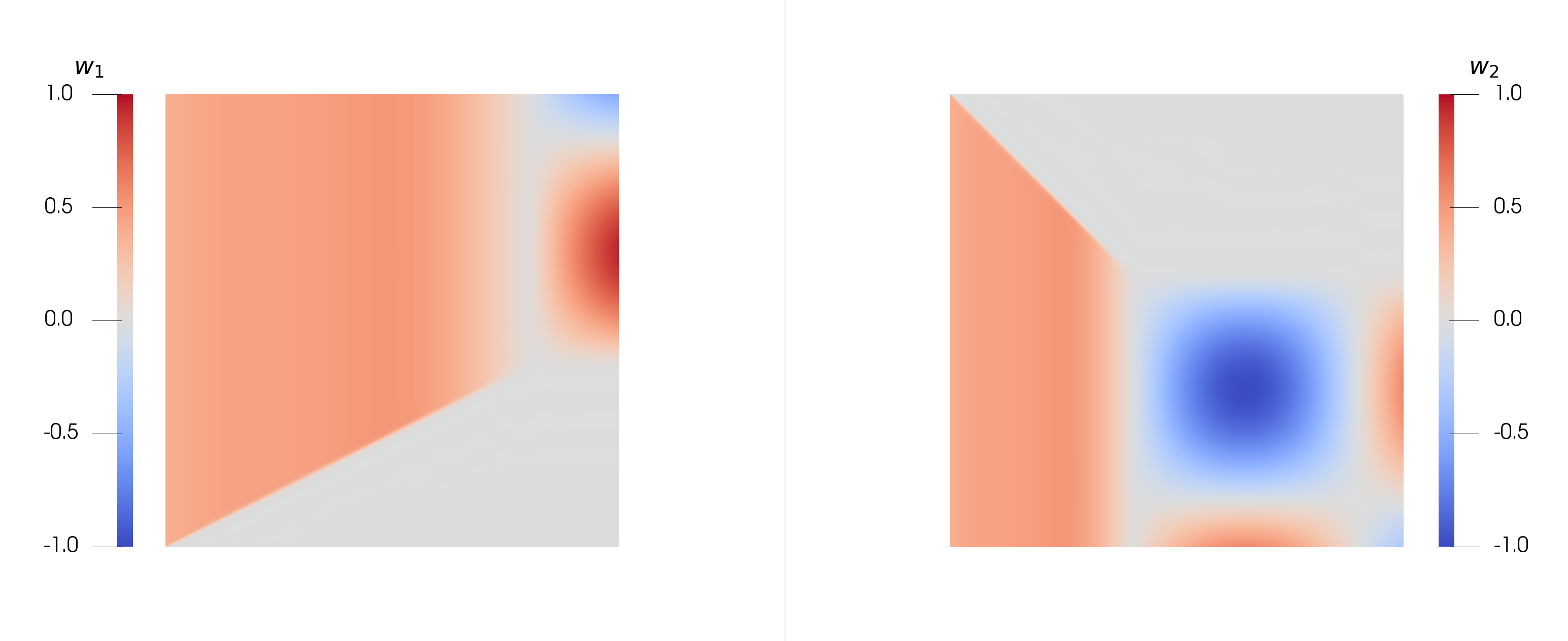}
		\caption*{$t=0.2$}
		\vspace{0.2cm}
		\includegraphics[trim = 0cm 5cm 0cm 0cm, clip, width=0.83\textwidth]{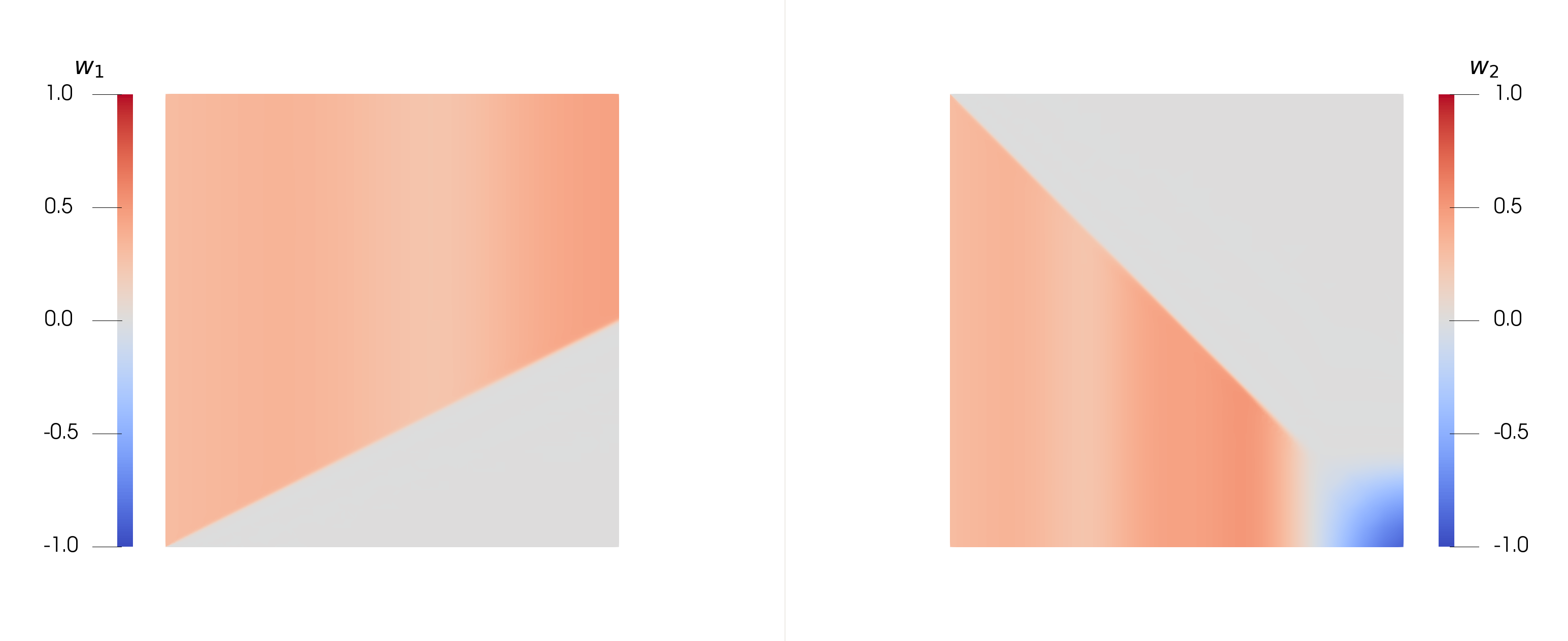}
		\caption*{$t=0.4$}
		\vspace{0.2cm}
		\includegraphics[trim = 0cm 5cm 0cm 0cm, clip, width=0.83\textwidth]{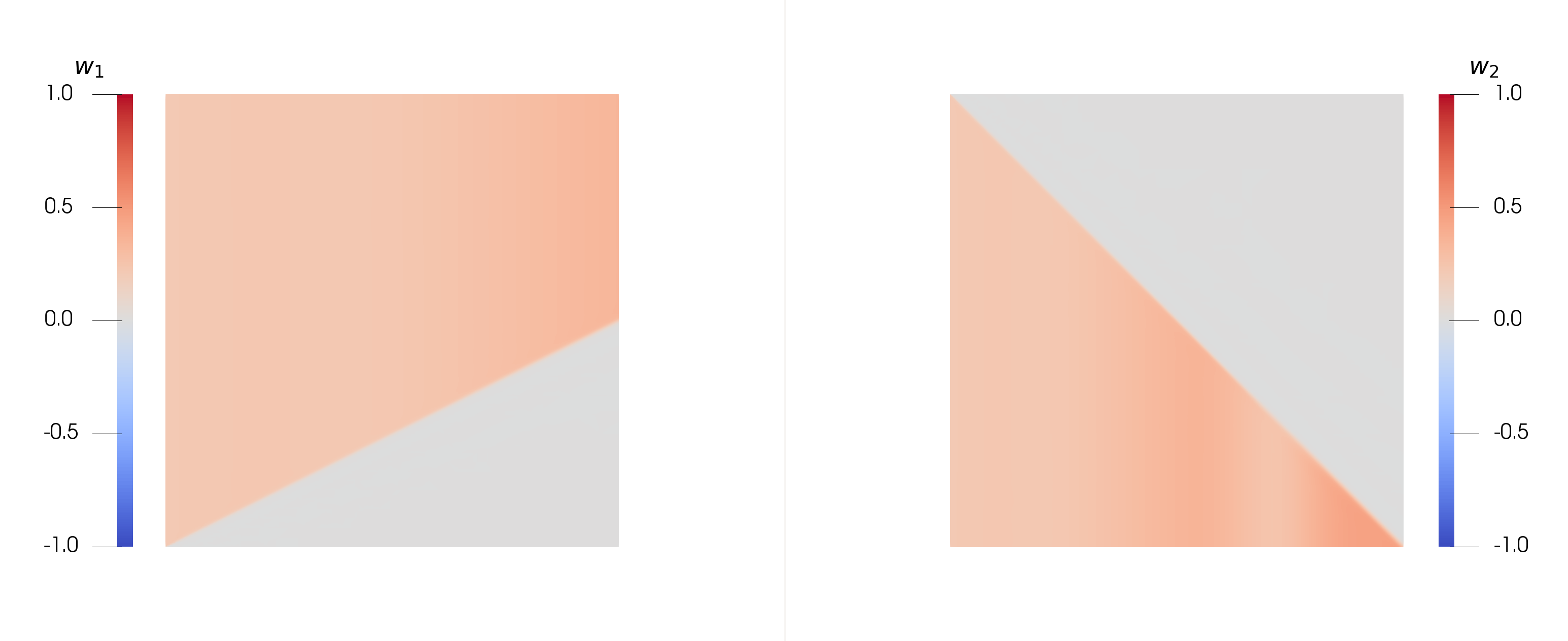}
		\caption*{$t=0.6$}
		\caption{
			Decay of the controlled solution $\V{w}$
	for a simulation with $P=3$, $\operatorname*{CFL} = 0.7$, $\Lmax=6$ and $c_{thresh} = 0.1$ at different times $t$.}
	\label{fig:sol_decayP3}
	\end{figure}
	For the
        problem at hand
	we expect a transport of the first component $w_1$ in the direction $\left(4, 2\right)^T$ and of the second component $w_2$ in $\left(2, -2\right)^T$.
	That is exactly what we see in Figure \ref{fig:sol_decayP3}.
	The boundary values which are given by the control \eqref{integral_for_testcontrol} are transported from the left boundary to the inside of $\Omega$ and ensure that the solution decays.
	At the bottom or at the top we set zero boundary conditions, i.e., the solution equals zero in that region.

\section{Outlook}
\label{sec:concl}

We investigated the influence of numerical dissipation on the decay of the Lyapunov function. For this purpose, we analyzed the stabilization problem using boundary feedback control with viscosity  for the solution of a linear, parabolic multi--dimensional scalar problem. In contrast to the inviscid problem the decay of the Lyapunov function cannot be estimated only by an exponentially decaying term but by an additional integral term accounting for viscosity. \\
	 Since numerical dissipation of any numerical scheme acts similar to physical viscosity, it has been long suspected that numerical dissipation will trigger a similar term in the estimate of the decay of the discrete Lyapunov function. This has now been confirmed and quantified by general  finite volume schemes applied to the inviscid feedback control problem.
Results in the one-dimensional problem have been given as well as for the multi-dimensional case. \\
	 Finally, the analytical results have been confirmed by numerical computations for the one-dimensional and two-dimensional inviscid feedback control problem.
	 The main findings are that numerical dissipation leads to a faster decay of the discrete Lyapunov function in comparison with the decay of the theoretical Lyapunov function.
This becomes most evident for first-order schemes. Increasing the order of the numerical scheme decreases the numerical dissipation and, the decay of the discrete Lyapunov function converges to the decay rate of the  exact Lyapunov function.

\section*{Acknowledgments}
Funded by the Deutsche Forschungsgemeinschaft (DFG, German Research Foundation)
through
SPP 2410 Hyperbolic Balance Laws in Fluid Mechanics: Complexity, Scales, Randomness (CoScaRa) within the Projects 
525842915, 525842915
(Numerical Schemes for Coupled Multi-Scale Problems)
and 525939417 (A Sharp Interface Limit by Vanishing Volume Fraction for 
Non-Equilibrium Two Phase Flows modeled by Hyperbolic Systems of Balance Laws),
 through
GRK2379 (IRTG Hierarchical and Hybrid Approaches in Modern Inverse Problems),
and through
SPP 2183  Eigenschaftsgeregelte Umformprozesse with the Projects  
424334423
Entwicklung eines flexiblen isothermen Reckschmiedeprozesses für die eigenschaftsgeregelte Herstellung von Turbinenschaufeln aus Hochtemperaturwerkstoffen.
\phantomsection
\addcontentsline{toc}{section}{\textbf{References}}
\renewcommand{\refname}{References}
\bibliographystyle{siam}
\bibliography{HHMT-bib,Literature_1}   

\medskip
\medskip

\end{document}